\newtheorem{theorem}{Theorem}[section]
\newtheorem{definition}[theorem]{Definition}
\newtheorem{lemma}[theorem]{Lemma}
\newtheorem{corollary}[theorem]{Corollary}
\newtheorem{remark}{Remark}
\renewcommand{\Re}{{\,\operatorname{Re}\,}}
\begin{document}

\title{Coefficient estimates for the $q$-starlike class}
\author{Ming Li$^{*}$, Ao-Li Zhu}

\begin{abstract}
By employing the $q$-difference operator, various classes of $q$-extensions of starlike functions have emerged from many different viewpoints and perspectives. In this paper, we improve some results for the Bieberbach-type problems of $q$-starlike class by Agrawal [S. Agrawal, Indagationes Mathematicae, 2021] and provide a comprehensive analysis of the Taylor coefficients of $q$-starlike functions using
the Carath\'eodory-Toeplitz theorem and its corollaries. For real $q\in(0,1)$, we obtain precise upper bounds for the initial few coefficients. Subsequently, we analyze the Hankel and Toeplitz determinants based on the foundational results. Finally, we establish the upper bounds of the coefficients by applying Parseval's theorem to $q$-starlike functions with complex $q$.
\end{abstract}
\maketitle

\renewcommand{\thefootnote}{}
\footnotetext{{\ 2020 Mathematics Subject Classification.}30C45, 30C50.}
\footnotetext{{\ Key words and phrases.} $q$-derivative, Carath\'{e}odory function, Schwarz function, Hankel determinant, Toeplitz determinant.}
\footnotetext{\ *Corresponding author}

\vspace{-1cm}

\section{Introduction}
Let $\mathbb{C}$ denote the complex plane and $\mathbb{D}=\{z:|z|<1\}$ be the unit disc. Denote $\mathcal{H}$ the family of all analytic functions in $\mathbb{D}$. $\mathcal{A}\subset\mathcal{H}$ is the family of functions have the form of
\begin{equation}\label{A}
f(z)=z+a_2z^2+a_3z^3+\cdots,\quad\text{for}\;z\in\mathbb{D}.
\end{equation}
Recall that a set \( E \subset \mathbb{C} \) is said to be starlike with respect to a point \( \omega_0 \in E \) if the line segment joining \( \omega_0 \) to every other point \( \omega \in E \) lies entirely within \( E \). \( S^* \) denotes the class of functions that map the unit disk to a region starlike with respect to 0. A function \( f \in S^* \) can be analytically characterized as follows: \( f \in \mathcal{A} \) and satisfies \( \Re\left( \frac{zf'(z)}{f(z)} \right) > 0 \) for all \( z \in \mathbb{D} \).
From the perspective of series, it is straightforward to observe that the derivative of a function \( f \in \mathcal{A} \) can be re-expressed using convolution (denoted by ``$\ast$") as follows:
\begin{equation}
f'(z)=\frac{1}{z}\left\{f(z)\ast\frac{z}{(1-z)^2}\right\}.
\end{equation}
Here, the convolution is defined in the sense of Ruscheweyh (see, e.g., \cite{Ruscheweyh1982}) as
\[
(f\ast g)(z):=z+a_2b_2z^2+a_3b_3z^3+\cdots,
\]
where \( f(z)=z+\sum_{n=2}^{\infty}a_nz^n \) and \( g(z)=z+\sum_{n=2}^{\infty}b_nz^n \) are elements of \( \mathcal{A} \).
Recently, Piejko et al. \cite{piejko2019q,piejko2020convolution} introduced another operator
\begin{equation}
D_{\zeta}f(z)=\frac{1}{z}\left\{f(z)\ast\frac{z}{(1-\zeta z)(1-z)}\right\},
\end{equation}
where $\zeta\in \mathbb{C}$ with $|\zeta|\leq 1$.
Consequently, the authors provided the definition of $\zeta$-starlike functions \cite{piejko2023}, which is defined as following.
\begin{definition}\cite{piejko2020convolution,piejko2023}
	Let  $\zeta\in \mathbb{C}$ with $|\zeta|\leq 1$. An analytic function $f\in\mathcal{A}$  is said to be $\zeta$-starlike of order $\alpha$ if it satisfies the condition
	\begin{equation}\label{def}
	\Re\frac{zD_{\zeta}f(z)}{f(z)}>\alpha,\quad z\in\mathbb{D},
	\end{equation}
	 where $\alpha\in[0,1)$. The class of all such functions is denoted by $S_{\zeta}^*(\alpha)$.
\end{definition}
It is worth noting that if we take $\zeta=q$ and $q\in(0,1)$, $D_{\zeta}$ reduces to the $q$-derivative \cite{Jackson1909}, which is defined by
\begin{equation*}
D_qf(z)=\left\{\begin{aligned}
&\frac{f(z)-f(qz)}{(1-q)z},\quad &&z\neq 0,\\
&f'(0),\quad &&z=0.
\end{aligned}
\right.
\end{equation*}
It is clear that $\displaystyle\lim_{q\to 1^-}D_qf(z)=f'(z)$. 
By using the $q$-derivative, various families of $q$-extensions of starlike functions have emerged. For example, Ismail et al. \cite{ismail1990} introduced a class of generalized starlike functions:
$
PS_q^*=\left\{f\in \mathcal{A}:\left|\frac{zD_qf(z)}{f(z)}-\frac{1}{1-q}\right|\leq \frac{1}{1-q}\right\}.
$

Agrawal \cite{Agrawal2021} investigated the class of $q$-starlike functions of order $\alpha$, which constitutes a particular instance of $\zeta$-starlike functions when $\zeta=q$. Throughout this paper, we will use the simplifed notation  $S_{q}^*$ instead of $S_{q}^*(0)$.

The Hankel and Toeplitz determinants (defined in Section 4) play an important role in the study of singularities, power series with integral coefficients, and many other areas \cite{grenander1958,pommerenke1966,annals2011}.
The upper bounds of them for starlike functions have been thoroughly examined by various researchers \cite{janteng2007,choo2018,Kwon2019H,zaprawa2021,kowalczyk2022sharp,rath2022,sim2022,wang2023,Verma2023,Ali2023,zaprawa2024,Kumarverma2025,zaprawa2025}.
Related studies on \( q \)-starlike functions have appeared separately in \cite{Agrawal2021,srivastava2021,Taj2025,Qiu2025}.  

In this paper, we systematically investigates the coefficient properties and Hankel determinants of analytic function family $S_{\zeta}^*(\alpha)$. In section 3, we focus on the Bieberbach-type problem to the family $S_q^*$ (where $q$ is a real number). We provide upper bounds of $|a_n|$ for $f\in S_{q}^*$. The precise bounds of coefficients $|a_2|,|a_3|,|a_4|$ for $f\in S_{q}^*$ are provided. In section 4, we examine the Hankel and Toeplitz determinants for the coefficients of functions in the family $S_q^*$, not only significantly improving Zaprawa's results [S. Agrawal, Indagationes Mathematicae, 2021, Theorem 3.4] on the upper bounds of second-order Hankel determinants but also extending the research methodology to the estimation problem of higher-order determinants. Notably, in section 5, we expand the research perspective to the complex domain, conducting an in-depth exploration of the Bieberbach-type problem to family $S_{\zeta}^*(\alpha)$ (with parameter $\zeta\in\mathbb{C}$ and $|\zeta|\leq 1$) and obtaining a series of new findings with theoretical significance.

\section{Preliminaries}

The Carath\'{e}odory class $\mathcal {P} $ and Schwarz class play important roles in geometric function theory.
They are defined by
\begin{align*}
\mathcal P:=\{p(z)\in\mathcal{H}: p(0)=1, \Re p(z)>0\}\;\text{and}\;
\mathcal{B}_0:=\{\omega\left(z\right)\in\mathcal{H}: \omega(0)=0, \left|\omega\left(z\right)\right|<1\}
\end{align*}
separately. It is well known that for any function $p(z)=1+p_1z+p_2z^2+p_3z^3+\cdots\in\mathcal{P}$, there is always an analytic function
$\omega\left(z\right)=b_1z+b_2z^2+\cdots\in\mathcal{B}_0$
such that
\begin{equation}\label{pw}
p(z)=\frac{1+\omega(z)}{1-\omega(z)}.
\end{equation}
A comparison of the coefficients on both sides of the equation \eqref{pw} reveals the following relationships
\begin{equation}\label{b1b2b3}
2b_1=p_1,\quad 4b_2=2p_2-p_1^2,\quad 8b_3=4p_3-4p_1p_2+p_1^3.
\end{equation}

A necessary and sufficient condition for \( \mathcal{P} \) is given by the Carath\'{e}odory-Toeplitz Theorem (see \cite{grenander1958} or \cite{tsuji1959}).

Based on this theorem, Libera and Z{\l}otkiewicz \cite{libera1982} parameterized the coefficients of the Carath\'{e}odory function as follows:
\begin{equation}\label{p2}
2p_2 = p_1^2 + x(4 - p_1^2),
\end{equation}
\begin{equation}\label{p3}
4p_3 = p_1^3 + 2(4 - p_1^2)p_1x - p_1(4 - p_1^2)x^2 + 2(4 - p_1^2)(1 - |x|^2)y,
\end{equation}
under the assumption that \( p_1 > 0 \). Here, \( x \) and \( y \) are complex numbers satisfying \( |x| \leq 1 \) and \( |y| \leq 1 \), respectively.
Applying the representations of $p_2$ and $p_3$ from \eqref{p2} and \eqref{p3} to \eqref{b1b2b3} gives the following.
\begin{lemma}\label{b1}\cite{Simon2005}
	For a function $\omega(z)=b_1z+b_2z^2+\cdots\in\mathcal{B}_0$, if $b_1>0$, then
	there are complex numbers $x$, $y$ such that
	\begin{equation}\label{b2b3}
	b_2=x(1-b_1^2),\quad b_3=(1-b_1^2)[(1-|x|^2)y-b_1x^2],
	\end{equation}
	and $|x|\leq 1$, $|y|\leq 1$.
\end{lemma}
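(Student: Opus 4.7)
The plan is to derive the representation for $b_2$ and $b_3$ by passing from the Schwarz function $\omega$ to its associated Carath\'eodory function via the standard transformation \eqref{pw}, and then exploiting the Libera--Z\l{}otkiewicz parametrization of $p_2,p_3$ already recalled in \eqref{p2}--\eqref{p3}.

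First I would set $p(z)=\bigl(1+\omega(z)\bigr)/\bigl(1-\omega(z)\bigr)$, which lies in $\mathcal P$, and invoke the coefficient comparison \eqref{b1b2b3}, namely $p_1=2b_1$, $2p_2-p_1^2=4b_2$ and $4p_3-4p_1p_2+p_1^3=8b_3$. Since $b_1>0$, we have $p_1>0$, so the hypothesis needed to apply \eqref{p2}--\eqref{p3} is in force, and one obtains complex numbers $x,y$ with $|x|,|y|\le 1$ parameterizing $p_2$ and $p_3$.

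Next I would substitute these parametrizations into the three identities in \eqref{b1b2b3}. For $b_2$ the computation is immediate: $4b_2=2p_2-p_1^2=x(4-p_1^2)=4x(1-b_1^2)$, so $b_2=x(1-b_1^2)$. For $b_3$ one substitutes both $p_2$ and $p_3$ into $8b_3=4p_3-4p_1p_2+p_1^3$; the terms $p_1^3$ and the linear-in-$x$ piece $2(4-p_1^2)p_1 x$ cancel against the corresponding contributions coming from $4p_1p_2=2p_1^3+2p_1x(4-p_1^2)$, leaving only
\[
8b_3=(4-p_1^2)\bigl[-p_1 x^2+2(1-|x|^2)y\bigr]=8(1-b_1^2)\bigl[(1-|x|^2)y-b_1 x^2\bigr],
\]
which yields the stated formula for $b_3$.

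There is no real obstacle here: the lemma is a bookkeeping translation of the Libera--Z\l{}otkiewicz formulas through the M\"obius change of variables. The only thing to be careful about is checking that the cancellation in the $b_3$ computation goes through cleanly and that the factor $4-p_1^2=4(1-b_1^2)$ is handled consistently in both identities, so that the same pair $(x,y)$ works simultaneously for $b_2$ and $b_3$ with $|x|,|y|\le 1$ inherited directly from \eqref{p2}--\eqref{p3}.
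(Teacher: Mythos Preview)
Your proposal is correct and follows exactly the approach indicated in the paper: the lemma is obtained by substituting the Libera--Z\l{}otkiewicz parametrizations \eqref{p2}--\eqref{p3} into the coefficient relations \eqref{b1b2b3}, using $p_1=2b_1>0$. The cancellations you describe in the $b_3$ computation are correct, and nothing further is needed.
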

\begin{remark}\label{wx-1}
When \( b_1 \) is a real number, \( p_1 \) is also real. Furthermore, if we set \( x = -1 \) in \eqref{p2} (i.e., \( p_2 = p_1^2 - 2 \)), then by a lemma due to Li and Sugawa (Lemma 2.3 in \cite{li2017note}), we obtain 
\(
p(z) = (1 - z^2)/(1 - p_1 z + z^2).
\)
Consequently, from \eqref{pw} it follows that either 
\(
\omega(z) = z(2z - p_1)/(p_1 z - 2)
\)
or 
\(
\omega(z) = z(z - b_1)/(b_1 z - 1).
\)
\end{remark}
\begin{remark}\label{wb11}
Observe that when $b_1=1$, we have $b_2=0$ and $b_3=0$. Schwarz lemma states that the only function in $\mathcal{B}_0$ with $b_1=1$ is $\omega(z)=z$.
\end{remark}
This paper concerns two fundamental functionals for class $\mathcal{B}_0$, with the primary result established by Prokhorov and Szynal \cite{prokhorov1981}.

\begin{lemma}\label{b2}\cite{prokhorov1981}
	If an analytic function \( \omega(z) = b_1 z + b_2 z^2 + \cdots \in \mathcal{B}_0 \), then for all real numbers \( \mu, \nu \), the following sharp estimate holds:
	\[
	|b_3 + \mu b_1 b_2 + \nu b_1^3| \leq |\nu| \quad \text{for} \quad (\mu, \nu) \in D_1
	\]
	where
	\begin{equation*}
	\begin{aligned}
	&D_1=\left\{(\mu,\nu):|\mu|\geq \frac{1}{2}, \,\nu\leq -\frac{2}{3}(|\mu|+1)\; \text{or}\; |\mu|\geq 4, \,\nu\geq \frac{2}{3}(|\mu|-1)\right\}.
	\end{aligned}
	\end{equation*}
\end{lemma}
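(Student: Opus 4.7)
The plan is to reduce the problem to a finite-dimensional real optimization via the Libera--Z\l{}otkiewicz-type parametrization of Lemma \ref{b1}, and then carry out a case analysis that matches the two sub-regions of $D_1$.

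First I would exploit rotational invariance. Under $\omega(z)\mapsto\omega(e^{i\beta}z)$ the coefficients scale as $b_n\mapsto b_n e^{in\beta}$, so
\[
b_3 + \mu b_1 b_2 + \nu b_1^3 \;\mapsto\; e^{3i\beta}\bigl(b_3 + \mu b_1 b_2 + \nu b_1^3\bigr),
\]
and the functional's modulus is preserved. Choosing $\beta=-\arg b_1$ (for $b_1\neq 0$; the case $b_1=0$ is covered by Cauchy's estimate $|b_3|\le 1\le|\nu|$, which holds on all of $D_1$) normalizes $b_1=:t\in[0,1]$. Lemma \ref{b1} then provides $b_2=(1-t^2)x$ and $b_3=(1-t^2)\bigl[(1-|x|^2)y-tx^2\bigr]$ with $|x|,|y|\le 1$, so that
\[
b_3+\mu b_1 b_2+\nu b_1^3 \;=\; \nu t^3 + (1-t^2)t(\mu x-x^2) + (1-t^2)(1-|x|^2)y.
\]

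The triangle inequality with $|y|\le 1$ bounds the left side by $(1-t^2)(1-|x|^2)+\bigl|\nu t^3+(1-t^2)t(\mu x-x^2)\bigr|$; writing $x=re^{i\phi}$, this becomes a real function $\Phi(t,r,\phi)$ on $[0,1]^2\times[0,2\pi)$. The key step is to maximize $\Phi$ over $\phi$: the inner modulus squared is a quadratic in $c=\cos\phi$ with leading coefficient $4A_0 A_2=-4\nu t^4(1-t^2)r^2$, where $A_0=\nu t^3$, $A_1=\mu t(1-t^2)r$, and $A_2=-t(1-t^2)r^2$. The sign of this leading coefficient (opposite to that of $\nu$) determines whether the maximum occurs at $c=\pm 1$ (for $\nu<0$, where the parabola opens upward) or potentially at the interior vertex $c^{*}$ of the parabola (for $\nu>0$, where it opens downward). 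After the $\phi$-optimization, $\Phi$ becomes a polynomial in $(t,r)$ whose maximum one analyses on $[0,1]^2$.

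The main obstacle is the case analysis that singles out the two sub-regions of $D_1$. The sub-region $\{|\mu|\ge\tfrac12,\;\nu\le-\tfrac23(|\mu|+1)\}$ arises from the $\nu<0$ case: the maximum of the $(t,r)$-polynomial on the boundary $r=1$ forces $|\nu|\ge\tfrac23(|\mu|+1)$ upon evaluation at $t=1$, while the threshold $|\mu|\ge\tfrac12$ governs the sign of the $r$-derivative so that the maximum is indeed attained at $r=1$ rather than in the interior. The sub-region $\{|\mu|\ge 4,\;\nu\ge\tfrac23(|\mu|-1)\}$ arises from the $\nu>0$ case, where the hypothesis $|\mu|\ge 4$ is precisely what forces $|c^{*}|\ge 1$ on the relevant range of $(t,r)$, preventing the interior vertex of the $\phi$-parabola from producing a larger value; the boundary evaluation at $r=1,\,t=1$ then yields $|\nu|\ge\tfrac23(|\mu|-1)$. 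In both sub-regions the extremal is $\omega(z)=z$ (for which $t=1$ and $b_2=b_3=0$), which attains $|\nu|$ exactly and certifies sharpness.
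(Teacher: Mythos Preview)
The paper does not prove this lemma: it is quoted directly from Prokhorov--Szynal \cite{prokhorov1981} and used as a black box. So there is no ``paper's own proof'' to compare against; what one can compare with is the original argument in \cite{prokhorov1981}, which indeed proceeds by exactly the reduction you outline --- rotate so that $b_1=t\in[0,1]$, parametrize $b_2,b_3$ as in Lemma~\ref{b1}, bound the $y$-term by the triangle inequality, and then optimize the remaining real function of $(t,r,\phi)$. In that sense your strategy is the correct one.

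Where your sketch becomes unreliable is in the explanation of how the specific boundaries of $D_1$ arise. At $t=1$ the expression collapses to $|\nu|$ regardless of $(\mu,\nu)$, so ``evaluation at $t=1$'' cannot by itself \emph{force} the inequality $|\nu|\ge\frac{2}{3}(|\mu|+1)$. What actually produces that threshold (for $\nu<0$, say $\mu\ge 0$) is the one-variable cubic obtained at $r=1$, $\phi=\pi$, namely $g(t)=(\nu+\mu+1)t^3-(\mu+1)t$: the condition $\nu\le-\frac{2}{3}(\mu+1)$ is equivalent to the critical point $t_*^2=\frac{\mu+1}{3(\nu+\mu+1)}$ lying in $[1,\infty)$, so that $|g|$ is monotone on $[0,1]$ and attains its maximum $|\nu|$ at $t=1$. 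Likewise, your claim that ``$|\mu|\ge 4$ is precisely what forces $|c^{*}|\ge 1$'' does not hold as stated: the vertex
\[
c^{*}=\frac{\mu}{4r}-\frac{\mu(1-t^2)r}{4\nu t^{2}}
\]
depends on $(t,r)$ and can lie in $(-1,1)$ even when $|\mu|\ge 4$ (take $t$ small). The thresholds $|\mu|\ge\frac12$ and $|\mu|\ge 4$ in \cite{prokhorov1981} emerge from a more delicate comparison of the several competing boundary and interior extrema of the full $(t,r,\phi)$-problem, not from a single monotonicity or vertex-location argument. Your plan is the right scaffold, but the case analysis you describe would need substantial revision before it actually delivers the stated region $D_1$.
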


The second functional is
\begin{equation}\label{Y}
Y(a,b,c)=\max_{z\in \mathbb{D}}(|a+bz+cz^2|+1-|z|^2),
\end{equation}
which is central to our proofs and connects to Ohno-Sugawa's \cite{ohno2018} findings.
\begin{lemma}\label{OhnoSugawa}\cite{ohno2018}
	Let $a,b,c\in {\mathbb R}$, if $ac\geq 0$, then
	\begin{equation*}
	Y(a,b,c)=
	\left\{
	\begin{aligned}
	&|a|+|b|+|c|,\qquad \qquad \left(|b|\geq 2(1-|c|)\right), \\
	&1+|a|+\frac{b^2}{4(1-|c|)},\quad \; \left(|b|<2(1-|c|)\right).
	\end{aligned}
	\right.
	\end{equation*}
\end{lemma}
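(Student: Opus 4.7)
The plan is to reduce the maximization over $z\in\mathbb{D}$ to a one-variable optimization of a quadratic on $[0,1]$, using symmetries to normalize the signs of $a,b,c$ and then a monotonicity argument in $\arg z$.

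First I would reduce to the case $a,b,c\ge 0$. Since the modulus is invariant under multiplying the polynomial by $-1$, we may assume $a\ge 0$; because $ac\ge 0$, this forces $c\ge 0$. The substitution $z\mapsto -z$ is a self-map of $\mathbb{D}$ preserving $|z|$ and sends $(a,b,c)$ to $(a,-b,c)$, so we may further assume $b\ge 0$. None of these steps changes $Y(a,b,c)$ because the statement only involves $|a|,|b|,|c|$.

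Next, for fixed $r=|z|\in[0,1]$, I would write $z=re^{i\theta}$ and expand
\[
|a+bz+cz^{2}|^{2}=a^{2}+b^{2}r^{2}+c^{2}r^{4}-2acr^{2}+2(ab+bcr^{2})r\cos\theta+4acr^{2}\cos^{2}\theta.
\]
Viewed as a function of $t=\cos\theta\in[-1,1]$, this is a quadratic with nonnegative leading coefficient $4acr^{2}$ and nonnegative linear coefficient $2(ab+bcr^{2})r$, so its maximum on $[-1,1]$ occurs at $t=1$. Hence $\max_{|z|=r}|a+bz+cz^{2}|=a+br+cr^{2}$, and the full problem becomes
\[
Y(a,b,c)=\max_{r\in[0,1]}\phi(r),\qquad \phi(r)=(a+1)+br+(c-1)r^{2}.
\]

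A direct case analysis on the sign of $c-1$ and the location of the critical point $r^{\ast}=b/[2(1-c)]$ finishes the argument. If $c\ge 1$, then $\phi$ is nondecreasing on $[0,1]$ and the maximum $\phi(1)=a+b+c$ is attained at the endpoint; the condition $|b|\ge 2(1-|c|)$ is automatic in this range. If $c<1$ and $b\ge 2(1-c)$, then $r^{\ast}\ge 1$ and again $\phi(1)=a+b+c$ is the maximum. If $c<1$ and $b<2(1-c)$, then $r^{\ast}\in(0,1)$ and $\phi(r^{\ast})=1+a+b^{2}/[4(1-c)]$ is the maximum. Restoring absolute values produces the two branches in the statement.

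The only delicate point is the angular maximization, and this is precisely where the hypothesis $ac\ge 0$ is used: it makes the coefficient of $\cos^{2}\theta$ nonnegative so that the quadratic in $t$ is convex on $[-1,1]$ and its maximum sits cleanly at $t=1$. Without this sign condition the optimal argument of $z$ need not be $0$, and a different branch structure for $Y$ would emerge; so I expect that carefully justifying this monotonicity, rather than the subsequent radial case work, is the main obstacle.
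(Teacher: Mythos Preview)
The paper does not give its own proof of this lemma; it is simply quoted from \cite{ohno2018} and used as a black box. So there is no ``paper's proof'' to compare against.

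That said, your argument is correct and is essentially the standard one. The reductions $a\mapsto -a$ (hence $c\ge 0$ from $ac\ge 0$) and $z\mapsto -z$ are legitimate and leave $Y(a,b,c)$ invariant; your expansion of $|a+bz+cz^{2}|^{2}$ in $t=\cos\theta$ is right, and the key observation that both the quadratic and linear coefficients in $t$ are nonnegative (the former precisely because $ac\ge 0$) pins the angular maximum at $\theta=0$. The radial step then reduces to a parabola in $r$ with vertex at $r^{\ast}=b/[2(1-c)]$, and your three cases ($c\ge 1$; $c<1$ with $r^{\ast}\ge 1$; $c<1$ with $r^{\ast}<1$) give exactly the two branches of the formula. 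One microscopic correction: in the last case $b$ may vanish, so $r^{\ast}\in[0,1)$ rather than $(0,1)$, but $\phi(0)=1+a=1+a+b^{2}/[4(1-c)]$ when $b=0$, so nothing changes. You have also correctly located the role of the hypothesis $ac\ge 0$.
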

As a special case, we have the following lemma.
\begin{lemma}\label{lemmaY2}
For $ q\in (0,1)$ and $b_1\in(0,1)$, if $a$, $b$ and $c$  are given by 
	\begin{equation}\label{abc}
	a=\frac{-(2+q)b_1^3}{(1-b_1^2)(1+q)^2},\quad b=\frac {2b_1}{(1+q)^2},\quad c=-b_1-\frac{(1+q+q^2)(1-b_1^2)(1+q)^2}{b_1},
	\end{equation}
then
	$$Y(a,b,c)=\frac{(1-q)b_1}{(1+q)(1-b_1^2)}+\frac{1+q+q^2}{b_1(1-b_1^2)(1+q)^2}.$$
\end{lemma}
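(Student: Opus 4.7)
The plan is to invoke Lemma \ref{OhnoSugawa} and verify that its ``first branch'' is the active one for the $a,b,c$ displayed in \eqref{abc}. For $q\in(0,1)$ and $b_1\in(0,1)$, direct inspection of \eqref{abc} shows $a<0$, $b>0$, and $c<0$, so in particular $ac>0\geq 0$ and the hypothesis of Lemma \ref{OhnoSugawa} is satisfied. Thus $Y(a,b,c)$ must equal one of the two formulas in that lemma, and the issue is to decide which.

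Next I would check that $|c|\geq 1$. This automatically forces $2(1-|c|)\leq 0\leq |b|$, so the first branch is active and $Y(a,b,c)=|a|+|b|+|c|$. The inequality $|c|\geq 1$ reduces, after clearing denominators, to a quadratic inequality in $b_1$ with coefficients that are polynomials in $q$. A short computation shows that the discriminant of this quadratic equals the perfect square $(1+q^2)^2$, so the quadratic factors explicitly with one root at $b_1=1$ and the other strictly greater than $1$. Consequently the quadratic keeps constant sign on $(0,1)$, and evaluating at any convenient interior point fixes that sign, giving $|c|\geq 1$ throughout $(0,1)$ with equality at the endpoint $b_1=1$.

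What remains is to simplify $|a|+|b|+|c|$ into the two-term form stated in the lemma. I would first combine $|a|+|b|$ over the denominator $(1-b_1^2)(1+q)^2$, where the numerator $(2+q)b_1^3+2b_1(1-b_1^2)$ collapses neatly to $b_1(qb_1^2+2)$, and then add $|c|$ over the common denominator $b_1(1-b_1^2)(1+q)^2$. Expanding and collecting by powers of $b_1^2$, the apparent $b_1^4$ contribution cancels, leaving the single fraction $\bigl[(1-q^2)b_1^2+(1+q+q^2)\bigr]/\bigl[b_1(1-b_1^2)(1+q)^2\bigr]$, which splits along the obvious line into exactly the two summands of the claim.

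The main obstacle is really just the algebraic bookkeeping in this last step, and in particular the fortunate vanishing of the $b_1^4$ term; everything conceptually important is provided by Lemma \ref{OhnoSugawa} once the first branch has been identified, and the identification itself is an easy quadratic factorization using $b_1=1$ as a root.
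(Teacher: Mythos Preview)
Your argument is correct and follows essentially the same route as the paper: verify $ac\geq 0$, check that the first branch of Lemma~\ref{OhnoSugawa} applies via a quadratic inequality in $b_1$, and then simplify $|a|+|b|+|c|$. The only minor difference is that the paper works directly with $|b|-2(1-|c|)$ and shows the resulting quadratic $H(b_1)=(2+2q)b_1^2-2(1+q)^2b_1+2(1+q+q^2)$ has negative discriminant, whereas you verify the stronger sufficient condition $|c|\geq 1$, whose associated quadratic $qb_1^2-(1+q)^2b_1+(1+q+q^2)$ has the perfect-square discriminant $(1+q^2)^2$ and factors with roots $b_1=1$ and $b_1=(1+q+q^2)/q>1$, exactly as you claim.
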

\begin{proof}
First, from equations \eqref{abc}, it is straightforward to observe that \( a < 0 \), \( b > 0 \), and \( c < 0 \) given \( q \in (0,1) \) and \( b_1 \in (0,1) \). Thus, we have \( ac > 0 \). Next, consider the quantity
\[
|b| - 2(1 - |c|) = b - 2(1 + c) = \frac{(2 + 2q)b_1^2 - 2(1 + q)^2b_1 + 2(1 + q + q^2)}{(1 + q)^2b_1}.
\]
Let the numerator be defined as
\[
(2 + 2q)b_1^2 - 2(1 + q)^2b_1 + 2(1 + q + q^2) =: H(b_1),
\]
which is a quadratic function in \( b_1 \). A simple calculation shows that its discriminant is
\[
\Delta = 4(1 + q)^4 - 16(1 + q + q^2)(1 + q) = 4q^4 - 8q^2 - 16q - 12.
\]
It is easy to verify that \( \Delta < 0 \) for \( q \in (0,1) \), since
\[
4q^4 - 8q^2 - 16q - 12 < 4 - 8q^2 - 16q - 12 = -8(q + 1)^2 < 0.
\]
Given \( 2 + 2q > 0 \), we conclude \( H(b_1) > 0 \), and therefore \( |b| - 2(1 - |c|) > 0 \) for \( b_1 \in (0,1) \). Applying Lemma \ref{OhnoSugawa}, it follows that
\[
Y(a,b,c) = |a| + |b| + |c| = -a + b - c = \frac{(1 - q)b_1}{(1 + q)(1 - b_1^2)} + \frac{1 + q + q^2}{b_1(1 - b_1^2)(1 + q)^2}.
\]
This completes the proof.
\end{proof}

\section{The Bieberbach-type problems on $S_q^*$}
In \cite{Agrawal2021}, Agrawal established the coefficient bound $|a_n|\leq \displaystyle\prod_{k=2}^{n}\left|\dfrac{1+[k-1]_q}{[k]_q-1}\right|$ for $f \in S_{q}^*$. In this section, we derive sharp estimates for the initial coefficients.
\begin{theorem}\label{theorem:1}
	For $f(z)=z+\displaystyle\sum_{n=2}^{\infty}a_nz^n\in S_{q}^*$, $q\in(0,1)$, we have
	$$|a_2|\leq \frac{2}{q},\qquad |a_3|\leq\frac{4+2q}{q^2(1+q)},\quad |a_4|\leq \frac{2(4+4q+3q^2+q^3)}{q^3(1+q+q^2)(1+q)}.$$
	These estimates are sharp, with extremal function 
	\begin{equation}\label{solution}
	f(z) = z \prod_{k=0}^\infty \frac{q - q^{k+1} z}{q- q^k (2-q)z}.
	\end{equation}
\end{theorem}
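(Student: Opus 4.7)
The plan is to convert the defining condition $\Re\frac{zD_qf(z)}{f(z)}>0$ into the equation $zD_qf(z)=f(z)p(z)$ for some $p\in\mathcal{P}$, and then pass to a Schwarz function via $p=(1+\omega)/(1-\omega)$ with $\omega\in\mathcal{B}_0$. Expanding $zD_qf(z)=z+\sum_{n\ge 2}(1+q+\cdots+q^{n-1})a_nz^n$ and comparing with $f\cdot p$ gives the recurrences
\begin{equation*}
qa_2=p_1,\qquad q(1+q)a_3=a_2p_1+p_2,\qquad q(1+q+q^2)a_4=a_3p_1+a_2p_2+p_3,
\end{equation*}
so each $a_n$ ($n=2,3,4$) is an explicit polynomial in $p_1,p_2,p_3$; substituting $p_1=2b_1$, $p_2=2b_2+2b_1^2$, $p_3=2b_3+4b_1b_2+2b_1^3$ from \eqref{b1b2b3} turns these into polynomials in the Schwarz coefficients $b_1,b_2,b_3$.

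The bound $|a_2|\le 2/q$ comes from $a_2=2b_1/q$ and $|b_1|\le 1$. For $a_3$ the calculation yields
\begin{equation*}
a_3=\frac{2\bigl[(2+q)b_1^2+qb_2\bigr]}{q^2(1+q)},
\end{equation*}
and combining the triangle inequality with $|b_2|\le 1-|b_1|^2$ gives $|a_3|\le\frac{2(q+2|b_1|^2)}{q^2(1+q)}\le\frac{4+2q}{q^2(1+q)}$, with equality at $|b_1|=1$.

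The main work is $|a_4|$. I would rearrange the resulting expression into the Prokhorov--Szynal form
\begin{equation*}
a_4=\frac{2}{q(1+q+q^2)}\bigl(b_3+\mu b_1b_2+\nu b_1^3\bigr),\qquad \mu=\frac{2(2+2q+q^2)}{q(1+q)},\quad \nu=\frac{4+4q+3q^2+q^3}{q^2(1+q)},
\end{equation*}
and then verify $(\mu,\nu)\in D_1$ so that Lemma \ref{b2} applies. Concretely, $\mu\ge 4$ reduces to $q^2\le 2$, and $\nu\ge\tfrac{2}{3}(\mu-1)$ reduces after clearing denominators to $12+4q+3q^2+q^3\ge 0$; both are trivial on $(0,1)$. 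Lemma \ref{b2} then gives $|b_3+\mu b_1b_2+\nu b_1^3|\le\nu$, hence the claimed bound on $|a_4|$.

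For sharpness, all three bounds are realised simultaneously by the Schwarz function $\omega(z)=z$ (so $b_1=1$, $b_2=b_3=0$ by Remark \ref{wb11}), which corresponds to $p(z)=(1+z)/(1-z)$ and reduces the defining identity to the $q$-functional equation $f(qz)/f(z)=\bigl(q-(2-q)z\bigr)/(1-z)$. Iterating this identity $n$ times and letting $n\to\infty$, using $f(q^nz)\sim q^nz$, produces exactly the telescoping infinite product \eqref{solution}; substituting $p_1=p_2=p_3=2$ in the closed-form expressions for $a_2,a_3,a_4$ confirms equality in each of the three inequalities. The only real obstacle is the algebraic bookkeeping needed to identify $(\mu,\nu)$ and to verify that it lies in $D_1$ uniformly for $q\in(0,1)$; everything else is a direct application of the Schwarz lemma and of Prokhorov--Szynal.
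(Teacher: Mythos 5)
Your proposal is correct and follows essentially the same route as the paper: the same Schwarz-function reformulation, the same coefficient identities, the same application of the Prokhorov--Szynal lemma with $\mu=\tfrac{4+4q+2q^2}{q(1+q)}$ and $\nu=\tfrac{4+4q+3q^2+q^3}{q^2(1+q)}$, and the same extremal function obtained from $\omega(z)=z$. The only cosmetic difference is that you bound $|a_3|$ via $|b_2|\le 1-|b_1|^2$ directly rather than through the Libera--Z{\l}otkiewicz parametrization, which is equivalent.
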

\begin{proof}
By the definition of $q$-starlike function and the relationship between Carath\'eodory functions and Schwartz functions, for a function $f(z)=z+a_2z^2+a_3z^3+\cdots\in S^*_{q} $, there exists an analytic function
	$\omega(z)=b_1z+b_2z^2+\cdots \in\mathcal{B}_0$ such that
	\begin{equation}\label{fw}
	\frac{zD_q f(z)}{f(z)}=\frac{1+\omega(z)}{1-\omega(z)}.
	\end{equation}
	Comparing the coefficients on both sides of \eqref{fw}, we have
	\begin{align}
	a_2&=\frac{2b_1}{q},\label{a2}\\
	a_3&=\frac{2b_2q+4b_1^2+2b_1^2q}{q^2(1+q)},\label{a3}\\
	a_4&=\frac{2q^2(1+q)b_3+4q(2+2q+q^2)b_1b_2+2(4+4q+3q^2+q^3)b_1^3}{q^3(1+q)(1+q+q^2)}.\label{a4}
	\end{align}
	We may assume \( b_1 \) is non-negative, as \( \omega(z) \) is rotationally invariant. Furthermore, we suppose \( b_1 \in [0, 1] \) since \( |b_1| \leq 1 \). It is easy to observe that
	\begin{equation*}
	|a_2|=\frac{2b_1}{q}\leq\frac{2}{q}.
	\end{equation*}
	The equality holds if and only if $b_1=1$.
	By Remark \ref{wb11}, we have $\omega(z)=z$.
	Applying the parametric presentation of $b_2$ in \eqref{b2b3}, we have
	\begin{align*}
	|a_3|
	=\frac{1}{q^2(1+q)}\left|(2q+4)b_1^2+2q(1-b_1^2)x\right|
	\leq\frac{4+2q}{q^2(1+q)}.
	\end{align*}
	The equality holds if and only if $x=0$ and $b_1=1$.
	In this case $\omega(z)=z$.
	
	For the estimation of $a_4$,  by using the quantity \eqref{a4}, we have
	\begin{align*}
	|a_4|&=\left|\frac{2q^2(1+q)b_3+4q(2+2q+q^2)b_1b_2+2(4+4q+3q^2+q^3)b_1^3}{q^3(1+q)(1+q+q^2)}\right|\\
	&=\frac{2}{q(1+q+q^2)}\cdot\left|b_3+\frac{2(2+2q+q^2)}{q(q+1)}b_1b_2+\frac{4+4q+3q^2+q^3}{q^2(1+q)}b_1^3\right|.
	\end{align*}
	Then, we could apply Lemma \ref{b2} with
	$$\mu=\frac{4+4q+2q^2}{q(1+q)},\qquad \nu=\frac{4+4q+3q^2+q^3}{q^2(1+q)}.$$
It can be observed that \( \mu > 4 \) and \( \nu > \dfrac{2}{3}(\mu - 1) \) given that \( q \in (0, 1) \). By Lemma \ref{b2}, we have
	\begin{align*}
	|a_4|
	&\leq\frac{2(4+4q+3q^2+q^3)}{q^3(1+q+q^2)(1+q)}.
	\end{align*}
	According to the literature \cite{prokhorov1981}, equality holds when $b_1=1$, i.e., $\omega(z)=z$.
	
	When $b_1=1$, we have $\omega(z)=z$, then
	\begin{equation}\label{exfun}
	\frac{zD_{q}f(z)}{f(z)}=\frac{1+z}{1-z}
	\end{equation}
	which is equivalent to
	\begin{align*}
	\frac{f(z)-f(q z)}{(1-q)f(z)}=\frac{1+z}{1-z}.
	\end{align*}
Multiplying both sides by \( (1 - q)f(z) \) and rearranging the terms, we obtain
\begin{equation}
f(q z) = f(z) \cdot \frac{q(1 + z) - 2z}{1 - z}.
\end{equation}
	
	We may assume the solution as an infinite product:
	\begin{equation}
	f(z) = z \prod_{k=0}^\infty \frac{q - q^{k+1} z}{q- q^k (2-q)z}.
	\end{equation}
	For $|q| < 1$, the product converges.
	The proof is completed.
\end{proof}

\begin{remark}
	When $q\to 1^-$, this theorem reduce to the Bieberbach conjecture(de Branges's theorem) for the starlike function, i.e.
$\left|a_2\right|\le2$, $\left|a_3\right|\le3$ and $\left|a_4\right|\le4.$
\end{remark}

\begin{remark}\label{conjecture}
	When $b_1=1$, we have $\omega(z)=z$. In this case, we have equation \eqref{exfun},
	which can be reformulated as:
	\begin{align*}
	f(z)\ast\frac{z}{(1-q z)(1-z)}=f(z)\cdot\frac{1+z}{1-z}.
	\end{align*}
	The above equation could be expressed as
	\begin{align*}
	\sum_{n=1}^{\infty}[n]_{q}a_nz^n=z+(2+a_2)z^2+\cdots+(2(1+a_2+\cdots+a_{n-1})+a_n)z^n+\cdots,
	\end{align*}
	where $[n]_{q}=1+q+q^2+\cdots+q^{n-1}$
	is the $q$-number.
	Comparing the coefficients of both sides, we obtain
	\begin{align*}
	a_2
	=\frac{2}{q},\quad a_3
	=\frac{2(2+q)}{q^2(1+q)},\quad a_4
	=\frac{2(4+4q+3q^2+q^3)}{q^3(1+q)(1+q+q^2)}.
	\end{align*}	
	Furthermore, it is not difficult to obtain
	\begin{equation}
	a_n=\prod_{k=2}^{n}\frac{1+[k-1]_q}{[k]_q-1}.
	\end{equation}
\end{remark}

\section{The Hankel and Toeplitz determinants of $S_q^*$}
The Hankel and Toeplitz determinants are defined by
\begin{equation*}
H_n^{(k)}=\begin{vmatrix}
a_n&a_{n+1}&\cdots&a_{n+k-1}\\
a_{n+1}&a_{n+2}&\cdots&a_{n+k}\\
\vdots&\vdots&&\vdots\\
a_{n+k-1}&a_{n+k}&\cdots&a_{n+2k-2}
\end{vmatrix}\;\text{and}\;
T_n^{(k)}=\begin{vmatrix}
a_n&a_{n+1}&\cdots&a_{n+k-1}\\
a_{n+1}&a_{n}&\cdots&a_{n+k-2}\\
\vdots&\vdots&&\vdots\\
a_{n+k-1}&a_{n+k-2}&\cdots&a_{n}
\end{vmatrix}
\end{equation*}
separately, for $k\geq 1$ and $n\geq 0$.
\begin{theorem}\label{thm3}
	If a function $f(z)=z+\displaystyle\sum_{n=2}^{\infty}a_nz^n\in S_{q}^*$  and $q\in(0,1)$, then
	\begin{equation}\label{a2a3-a4}
	|a_2a_3-a_4|\leq \frac{2(q+2)}{q^2(1+q+q^2)}.
	\end{equation}
	This estimate is sharp.
\end{theorem}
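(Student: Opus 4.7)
The plan is to express the functional $a_2a_3 - a_4$ as a linear combination of $b_3$, $b_1b_2$, and $b_1^3$, where $b_1,b_2,b_3$ are the first three Taylor coefficients of the Schwarz function $\omega$ associated to $f$ via \eqref{fw}, and then to apply the Prokhorov--Szynal inequality (Lemma \ref{b2}) with a suitable choice of the parameters $(\mu,\nu)\in D_1$. As in the proof of Theorem \ref{theorem:1}, by the rotational invariance of $\mathcal{B}_0$ we may assume $b_1\in[0,1]$.

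First, substitute the coefficient formulas \eqref{a2}, \eqref{a3}, \eqref{a4} into $a_2a_3-a_4$ and place the result over the common denominator $q^3(1+q)(1+q+q^2)$. The $b_1b_2$ terms combine via $4q(1+q+q^2)-4q(2+2q+q^2)=-4q(1+q)$, and the $b_1^3$ terms combine via $4(2+q)(1+q+q^2)-2(4+4q+3q^2+q^3)=2q(2+q)(1+q)$. After cancelling the common factor $q(1+q)$, this yields
\begin{equation*}
a_2a_3-a_4 = -\frac{2}{q(1+q+q^2)}\left(b_3 + \frac{2}{q}\,b_1b_2 - \frac{2+q}{q}\,b_1^{3}\right).
\end{equation*}

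Next, I would apply Lemma \ref{b2} with $\mu = 2/q$ and $\nu = -(2+q)/q$. For $q\in(0,1)$, clearly $|\mu|=2/q\geq 2\geq 1/2$. A brief computation gives
\begin{equation*}
\nu + \tfrac{2}{3}(|\mu|+1) = -\frac{2+q}{q} + \frac{2}{3}\!\left(\frac{2}{q}+1\right) = -\frac{2+q}{3q} < 0,
\end{equation*}
so $\nu\leq -\tfrac{2}{3}(|\mu|+1)$ and hence $(\mu,\nu)\in D_1$. Lemma \ref{b2} then yields
\begin{equation*}
\left|b_3 + \tfrac{2}{q}\,b_1b_2 - \tfrac{2+q}{q}\,b_1^{3}\right| \leq |\nu| = \frac{2+q}{q},
\end{equation*}
which delivers the stated bound $|a_2a_3-a_4|\leq 2(2+q)/(q^2(1+q+q^2))$.

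For sharpness, Lemma \ref{b2} achieves equality at $\omega(z)=z$, which corresponds to the extremal function \eqref{solution} of Theorem \ref{theorem:1}. With $b_1=1$ and $b_2=b_3=0$ the displayed identity for $a_2a_3-a_4$ evaluates to exactly $2(2+q)/(q^2(1+q+q^2))$, confirming sharpness. The main obstacle is the bookkeeping in the first step---the consolidation of the $b_1b_2$ and $b_1^3$ coefficients over the common denominator is somewhat intricate---but once the reduction to $|b_3+\mu b_1b_2+\nu b_1^3|$ is in hand, the rest is an immediate application of the Prokhorov--Szynal lemma with a trivial verification of $(\mu,\nu)\in D_1$.
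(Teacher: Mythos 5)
Your proposal is correct and follows essentially the same route as the paper: the identical reduction of $a_2a_3-a_4$ to the Prokhorov--Szynal functional $b_3+\mu b_1b_2+\nu b_1^3$ with $\mu=2/q$, $\nu=-(2+q)/q$, the same verification that $(\mu,\nu)\in D_1$ via $\nu+\tfrac{2}{3}(\mu+1)=-\tfrac{2+q}{3q}<0$, and the same sharpness argument at $\omega(z)=z$. Your algebra (including the consolidation of the $b_1b_2$ and $b_1^3$ coefficients) checks out against the paper's displayed identity.
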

\begin{proof}
By equalities \eqref{a2}-\eqref{a4}, we have
	\begin{align*}
	|a_2a_3-a_4|
	&=\frac{2}{q^2(1+q+q^2)}\left|-q b_3-2b_1b_2+(q+2)b_1^3\right|.
	\end{align*}
Set \( \mu = \dfrac{2}{q} \) and \( \nu = -\dfrac{q + 2}{q} \). It can be easily verified that
\[
\mu \geq 2 > \frac{1}{2}, \quad \nu + \frac{2}{3}(\mu + 1) = -\frac{q + 2}{3q} < 0
\]
for \( q \in (0,1) \). By Lemma \ref{b2}, we thus obtain
\[
\left| -qb_3 - 2b_1b_2 + (q + 2)b_1^3 \right| \leq q \cdot \frac{q + 2}{q} = q + 2.
\]
It follows that
\[
|a_2a_3 - a_4| \leq \frac{2}{q^2(1 + q + q^2)} \cdot (q + 2) = \frac{2(q + 2)}{q^2(1 + q + q^2)}.
\]
According to \cite{prokhorov1981}, equality holds when \( \omega(z) = z \). This completes the proof.
\end{proof}

\begin{remark}
	Obviously,when $q\to 1^-$, the equality \eqref{a2a3-a4} reduce to a result of Cho et al. {\cite{choo2018}} (Theorem 2.1 with $\alpha=0$).
\end{remark}
\begin{theorem}\label{theorem:3}
	If a function $f(z)=z+\displaystyle\sum_{n=2}^{\infty}a_nz^n\in S_{q}^*$ and  $q\in(0,1)$, then
	\begin{align*}
	|H_1^{(2)}|&=|a_3-a_2^2|\leq \frac{2}{q(q+1)},\\
	|H_2^{(2)}|&=|a_2a_4-a_3^2|
	\leq
	\left\{
	\begin{aligned}
	&\frac{4}{q^2(1+q)^2},\qquad  &&(a_2=0), \\
	&\frac{4(2+q)}{q^2(1+q)^2(1+q+q^2)},\quad &&(a_2\neq0).\\
	\end{aligned}
	\right.
	\end{align*}
	These estimates are sharp.
\end{theorem}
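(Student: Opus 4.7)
The plan is to convert both Hankel determinants into polynomial expressions in the Schwarz coefficients $b_1,b_2,b_3$ via \eqref{a2}--\eqref{a4}, and then estimate them using the parametric representation of Lemma \ref{b1} together with the functional $Y(a,b,c)$ handled in Lemmas \ref{OhnoSugawa} and \ref{lemmaY2}. As in Theorem \ref{theorem:1}, rotational invariance lets me assume $b_1 \in [0,1]$ throughout.

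For $|H_1^{(2)}|=|a_3-a_2^2|$, I would substitute \eqref{a2} and \eqref{a3}; the $b_1^2$-terms cancel, producing the clean identity
\[
a_3-a_2^2=\frac{2(b_2-b_1^2)}{q(1+q)}.
\]
The bound then reduces to the classical estimate $|b_2-b_1^2|\le 1$, which follows from the triangle inequality together with the Schwarz--Pick bound $|b_2|\le 1-|b_1|^2$. Sharpness is realized by $\omega(z)=z$.

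For $|H_2^{(2)}|=|a_2a_4-a_3^2|$, I would first expand \eqref{a2}--\eqref{a4} and collect terms to obtain
\[
a_2 a_4-a_3^2=\frac{4\bigl[-(2+q)b_1^4+2 b_1^2 b_2+(1+q)^2 b_1 b_3-(1+q+q^2)b_2^2\bigr]}{q^2(1+q)^2(1+q+q^2)}.
\]
The case $a_2=0$ corresponds to $b_1=0$, in which the bracket reduces to $-(1+q+q^2)b_2^2$, and the desired estimate follows at once from $|b_2|\le 1$, with equality for $\omega(z)=z^2$. For $a_2\ne 0$, I may assume $b_1\in(0,1]$; the endpoint $b_1=1$ forces $\omega(z)=z$ by Remark \ref{wb11} and realizes the claimed bound by a direct calculation. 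For $b_1\in(0,1)$, I would substitute the parametric forms from Lemma \ref{b1}, apply the triangle inequality in $y$, factor out $(1+q)^2 b_1(1-b_1^2)$, and identify the remaining expression with $Y(a,b,c)$ for precisely the coefficients $(a,b,c)$ listed in Lemma \ref{lemmaY2}. Substituting the explicit value of $Y$ from that lemma and clearing denominators, the bound collapses to $\tfrac{4 g(b_1)}{q^2(1+q)^2(1+q+q^2)}$ where $g(b_1)$ should reduce, after the $b_1^4$-coefficients cancel, to $(1+q+q^2)+(1-q^2)b_1^2$; since this is monotone increasing on $[0,1]$, its maximum is $g(1)=2+q$, which yields the claimed estimate and is attained only at $b_1=1$.

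The main obstacle will be the algebraic bookkeeping in the $a_2\ne 0$ case: deriving the compact form of $a_2 a_4-a_3^2$ above, matching it term-by-term with the $(a,b,c)$ of Lemma \ref{lemmaY2}, and verifying that the higher-order $b_1$-terms produced by $Y(a,b,c)$ cancel to leave the clean quadratic polynomial in $b_1^2$ whose endpoint maximum corresponds to the Schwarz extremal $\omega(z)=z$. Once this reduction is in hand, the two sharpness statements and the case split are immediate.
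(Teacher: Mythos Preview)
Your proposal is correct and follows essentially the same route as the paper: express $H_1^{(2)}$ and $H_2^{(2)}$ in terms of the Schwarz coefficients via \eqref{a2}--\eqref{a4}, handle $H_1^{(2)}$ by the elementary bound $|b_2|\le 1-b_1^2$, and for $H_2^{(2)}$ split into the cases $b_1=0$, $b_1=1$, and $b_1\in(0,1)$, in the last case factoring out $(1+q)^2b_1(1-b_1^2)$ and invoking Lemma~\ref{lemmaY2} to obtain the quadratic $g(b_1)=(1+q+q^2)+(1-q^2)b_1^2$ whose maximum $g(1)=2+q$ gives the stated bound. The only cosmetic difference is that for sharpness in $|H_1^{(2)}|$ the paper records the full extremal family $\omega(z)=z(z-b_1)/(b_1z-1)$ (i.e.\ $x=-1$ in the parametrization) rather than the single choice $\omega(z)=z$.
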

\begin{proof}
 For $f(z)=z+\displaystyle\sum_{n=2}^{\infty}a_nz^n\in S_{q}^* $, using equalities \eqref{a2} and \eqref{a3}, we have
	\begin{equation*}
	|H_1^{(2)}|=
	|a_3-a_2^2|=\left|\frac{2b_2+2b_1^2}{q(1+q)}\right|=\left|\frac{2x(1-b_1^2)-2b_1^2}{q(1+q)}\right|\le \frac{2}{q(q+1)},
	\end{equation*}
with the equality holds if and only if $x=-1$. By Remark \ref{wx-1}, in this case we know  $\omega(z)=z(z-b_1)/(b_1z-1)$.
	
Similarly, applying \eqref{a2} and \eqref{a3} to \( H_2^{(2)} \), we obtain
\begin{equation*}\label{a243}
|H_2^{(2)}| = |a_2a_4 - a_3^2| = \frac{4}{q^2(1 + q + q^2)} \left| b_1b_3 - \frac{1 + q + q^2}{(1 + q)^2}b_2^2 + \frac{2}{(1 + q)^2}b_1^2b_2 - \frac{2 + q}{(1 + q)^2}b_1^4 \right|.
\end{equation*}

For \( b_1 = 0 \), we have \( a_2 = 0 \) by \eqref{a2} and \( b_2 = x \) by \eqref{b2b3}. It follows that
\begin{equation*}
|a_2a_4 - a_3^2| = \frac{4}{q^2(1 + q)^2} \left| -x^2 \right| \leq \frac{4}{q^2(1 + q)^2}.
\end{equation*}
Equality holds if and only if \( |x| = 1 \). In this case, by the Carath\'{e}odory-Toeplitz Theorem, the corresponding Carath\'{e}odory function is \( p(z) = \dfrac{1 + x z^2}{1 - x z^2} \), which implies \( \omega(z) = x z^2 \).

For \( b_1 = 1 \), applying \eqref{b2b3} gives \( b_2 = 0 \) and \( b_3 = 0 \). Here,
\begin{equation*}
|a_2a_4 - a_3^2| = \frac{4(2 + q)}{q^2(1 + q)^2(1 + q + q^2)}.
\end{equation*}
	
For \( b_1 \in (0, 1) \), substituting \( b_2 \) and \( b_3 \) via \eqref{b2b3} yields
\begin{align*}
&\left| b_1b_3 - \frac{1 + q + q^2}{(1 + q)^2}b_2^2 + \frac{2}{(1 + q)^2}b_1^2b_2 - \frac{2 + q}{(1 + q)^2}b_1^4 \right| \\
&= \frac{4b_1(1 - b_1^2)}{q^2(1 + q + q^2)} \left| (1 - |x|^2)y + cx^2 + bx + a \right|,
\end{align*}
	where
	\begin{align*}
	a=-\frac{(2+q)b_1^3}{(1-b_1^2)(1+q)^2},\quad b=\frac {2b_1}{(1+q)^2},\quad c=-b_1-\frac{(1+q+q^2)(1-b_1^2)(1+q)^2}{b_1}.
	\end{align*}
	Thus
	$$
	|a_2a_4-a_3^2|\leq
	\frac{4b_1(1-b_1^2)}{q^2(1+q+q^2)}\left((1-|x|^2)+\left|cx^2+bx+a\right|\right)	
	.$$
	By using Lemma \ref{lemmaY2}, we have
	\begin{align*}
	\left|a_2a_4-a_3^2\right|&\leq\frac{4b_1(1-b_1^2)}{q^2(1+q+q^2)}\times \left(\frac{(1-q)b_1}{(1+q)(1-b_1^2)}+\frac{1+q+q^2}{b_1(1-b_1^2)(1+q)^2}\right)\\
	&=\frac{4b_1(1-q)}{q^2(1+q+q^2)(1+q)}+\frac{4}{q^2(q+1)^2}\\
	&\leq \frac{4(1-q)}{q^2(1+q+q^2)(1+q)}+\frac{4}{q^2(q+1)^2}\\
	&=\frac{4(2+q)}{q^2(1+q)^2(1+q+q^2)},
	\end{align*}
	since $q\in (0,1)$ and $b_1\in(0,1)$. The equality holds if and only $b_1=1$, which by Remark \ref{wb11} implies $\omega(z)=z$.
	This completes the proof.
\end{proof}

\begin{remark} We could show that $\dfrac{4}{q^2(1+q)^2}\leq \dfrac{4(2+q)}{q^2(1+q)^2(1+q+q^2)}$ for all $q\in(0,1)$. If we take
	\begin{align}
	h(q)&:=\frac{4(2+q)}{q^2(1+q)^2(1+q+q^2)}-\frac{4}{q^2(1+q)^2},
	\end{align}
	then
	$$h'(q)=\frac{4(-2-5q-4q^2+q^3+4q^4)}{q^3(1+q)^2(1+q+q^2)^2}.$$
Because
\begin{align*}
-2 - 5q - 4q^2 + q^3 + 4q^4 &< -2 - 5q - 4q^2 + q^2 + 4q^2 = q^2 - 5q - 2 < 0,
\end{align*}
for \( q \in (0, 1) \), it follows that \( h'(q) < 0 \); that is, \( h(q) \) is decreasing on the interval \( (0, 1) \). Since \( h(1) = 0 \), we conclude that \( h(q) \) must be positive for all \( q \in (0, 1) \).
	
	Therefore, the bound $|H_2^{(2)}|\leq \dfrac{4}{q^2(1+q)^2}$ established by Agrawal in \cite{Agrawal2021} (Theorem 3.4) is incorrect. 

\end{remark}
\begin{remark}
	When $q\to 1^-$, we obtain
	$$|H_2^{(2)}|=|a_2a_4-a_3^2| \leq 1$$
	for $f(z)=z+\displaystyle\sum_{n=2}^{\infty}a_nz^n$ in the class 
	$S^*$. This result was established by Janteng et al. \cite{janteng2007}.
\end{remark}
\begin{theorem}\label{T123}
	If a function $f(z)=z+\displaystyle\sum_{n=2}^{\infty}a_nz^n\in S_{q}^*$, $q\in(0,1)$, then
	\begin{align*}
	|T_1^{(2)}|&=|a_1^2-a_2^2|\leq 1+\frac{4}{q^{2}},\\
	|T_2^{(2)}|&=|a_2^2-a_3^2|\leq \frac{4}{q^4}+\frac{4(2+q)^2}{q^4(1+q)^2},\\
	|T_3^{(2)}|&=|a_3^2-a_4^2|\leq \frac{4(2+q)^2}{q^4(1+q)^2}+\frac{4(4+4q+3q^2+q^3)^2}{q^6(1+q+q^2)^2(1+q)^2}.
	\end{align*}
	These estimates are sharp.
\end{theorem}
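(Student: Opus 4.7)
The three estimates share the common form $|T_m^{(2)}|=|a_m^2-a_{m+1}^2|$ for $m=1,2,3$, so a single strategy handles all of them. My plan is to apply the elementary triangle inequality
\[
|a_m^2-a_{m+1}^2|\le |a_m|^2+|a_{m+1}|^2
\]
and then insert the sharp coefficient bounds already established in Theorem \ref{theorem:1} (together with $a_1=1$). For $m=1$ this produces $1+4/q^2$; for $m=2$ it produces the sum of the squares of $2/q$ and $2(2+q)/(q^2(1+q))$; and for $m=3$ it produces the sum of the squares of $2(2+q)/(q^2(1+q))$ and $2(4+4q+3q^2+q^3)/(q^3(1+q)(1+q+q^2))$. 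Each of these matches the claimed expression after purely routine simplification, so the upper-bound halves of all three statements require essentially no work beyond quoting Theorem \ref{theorem:1}.

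The one step that needs actual thought is sharpness. The extremal function $f_0$ produced in Theorem \ref{theorem:1} has positive real coefficients, so for $f_0$ the quantity $a_m^2-a_{m+1}^2$ is real, and its modulus equals $\bigl||a_m|^2-|a_{m+1}|^2\bigr|$ rather than the triangle-inequality bound $|a_m|^2+|a_{m+1}|^2$. To repair this I would exploit rotational invariance of $S_q^*$: the defining condition $\Re(zD_qf(z)/f(z))>0$ is preserved under the substitution $f(z)\mapsto g_\theta(z):=e^{-i\theta}f(e^{i\theta}z)$, so every rotation $g_\theta$ again lies in $S_q^*$, with coefficients $a_k(g_\theta)=e^{i(k-1)\theta}a_k(f)$. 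Consequently
\[
a_m^2(g_\theta)-a_{m+1}^2(g_\theta)=e^{2(m-1)i\theta}\bigl(a_m^2(f)-e^{2i\theta}a_{m+1}^2(f)\bigr).
\]

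Choosing $\theta=\pi/2$ yields $e^{2i\theta}=-1$, so the two summands align in argument and the triangle inequality becomes an equality; applying this rotation to $f_0$ therefore saturates each of the three bounds simultaneously. I expect this sharpness argument to be the only non-mechanical step: the upper bounds themselves drop out in one line each from Theorem \ref{theorem:1}, but one must remember to rotate the extremal function by $\pi/2$ so that the positive real coefficients of $f_0$ combine additively rather than subtractively.
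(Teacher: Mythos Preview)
Your upper-bound argument is exactly what the paper does: apply the triangle inequality $|a_m^2-a_{m+1}^2|\le|a_m|^2+|a_{m+1}|^2$ and insert the sharp bounds from Theorem~\ref{theorem:1}. No difference there.

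Where your proposal actually improves on the paper is sharpness. The paper asserts that equality in each of the three inequalities occurs ``if and only if $b_1=1$ and $\omega(z)=z$,'' i.e.\ for the extremal function $f_0$ of Theorem~\ref{theorem:1}. But as you correctly observe, the coefficients of $f_0$ are real and positive, so for instance
\[
|T_1^{(2)}(f_0)|=\left|1-\frac{4}{q^2}\right|=\frac{4}{q^2}-1\neq 1+\frac{4}{q^2},
\]
and similarly $|T_2^{(2)}(f_0)|$ and $|T_3^{(2)}(f_0)|$ equal $\bigl||a_m|^2-|a_{m+1}|^2\bigr|$, strictly less than the stated bounds. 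The paper's sharpness justification is therefore not correct as written.

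Your rotation argument repairs this cleanly. The computation $a_k(g_\theta)=e^{i(k-1)\theta}a_k(f)$ and the invariance $zD_qg_\theta(z)/g_\theta(z)=wD_qf(w)/f(w)\big|_{w=e^{i\theta}z}$ are both correct, so $g_\theta\in S_q^*$, and the choice $\theta=\pi/2$ turns $a_m^2-a_{m+1}^2$ into (a unimodular multiple of) $a_m^2+a_{m+1}^2$, which for the real-coefficient extremal $f_0$ equals $|a_m|^2+|a_{m+1}|^2$. Thus $g_{\pi/2}$ attains all three bounds simultaneously. This is precisely the extremal one sees in the classical case $q\to1^-$ (a rotation of the Koebe function), consistent with the result of Ali et al.\ cited in the remark following the theorem.
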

\begin{proof}
By the definition of Toeplitz determinant, we have
	$$|T_1^{(2)}|=|1-a_2^2|\leq 1+|a_2|^2.$$
	Thus, by Theorem \ref{theorem:1}, it follows that
	\begin{equation*}
	|T_1^{(2)}|\leq 1+\frac{4}{q^2}.
	\end{equation*}
	The equality holds if and only if $b_1=1$.
	Similarly, it can be shown that
	\begin{equation}\label{lk}
	|T_2^{(2)}|=|a_2^2-a_3^2|\leq |a_2|^2+|a_3|^2\leq\frac{4}{q^4}+\frac{4(2+q)^2}{q^4(1+q)^2},
	\end{equation}
	according to $|a_2|\leq \dfrac{2}{q}$ and $|a_3|\leq \dfrac{4+2q}{q^2(1+q)}$.  The equalities in \eqref{lk} hold if and only if $b_1=1$ and $\omega(z)=z$.
Applying the same method, we can deduce that
\begin{equation}\label{mn}
|T_3^{(2)}| \leq |a_3|^2 + |a_4|^2 \leq \frac{4(2 + q)^2}{q^4(1 + q)^2} + \frac{4(4 + 4q + 3q^2 + q^3)^2}{q^6(1 + q + q^2)^2(1 + q)^2}
\end{equation}
with equality if and only if \( b_1 = 1 \) and \( \omega(z) = z \).
	The theorem is proved.
\end{proof}

\begin{remark}
	When $q\to 1^-$, we have
	\begin{align*}
	|T_1^{(2)}|=|a_1^2-a_2^2|\leq 5,\quad
	|T_2^{(2)}|=|a_2^2-a_3^2|\leq 13,\quad
	|T_3^{(2)}|=|a_3^2-a_4^2|\leq 25.
	\end{align*}
These represent relevent results concerning starlike functions (see{ \cite{ali2018}}).
\end{remark}
\begin{theorem}\label{T1}
	If a function $f(z)=z+\displaystyle\sum_{n=2}^{\infty}a_nz^n\in S_{q}^*$, $q\in(0,1)$, then
	$$|T_1^{(3)}|
	\leq 1+\frac{8}{q^2}+\frac{4(3q^2+8q+4)}{q^4(1+q)^2}.$$
	This estimate is sharp.
\end{theorem}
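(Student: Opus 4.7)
The plan is to reduce the estimate of $|T_1^{(3)}|$ to three coefficient functionals that have already been sharply bounded, namely $|a_2|$, $|a_3|$ and $|a_3-a_2^2|$. Expanding the $3\times 3$ determinant with $a_1=1$ along the first row gives
\begin{equation*}
T_1^{(3)} = 1 - 2a_2^2 + 2a_2^2 a_3 - a_3^2 = (1-a_3)(1+a_3-2a_2^2),
\end{equation*}
and the decisive step is the rewriting $1+a_3-2a_2^2 = (1-a_2^2) + (a_3 - a_2^2)$, which yields
\begin{equation*}
T_1^{(3)} = (1-a_3)\bigl[(1-a_2^2) + (a_3-a_2^2)\bigr].
\end{equation*}
Applying the triangle inequality to both factors then produces
\begin{equation*}
|T_1^{(3)}| \leq (1+|a_3|)\bigl(1 + |a_2|^2 + |a_3-a_2^2|\bigr).
\end{equation*}

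Next I would substitute the estimates from Theorem \ref{theorem:1} and Theorem \ref{theorem:3}, namely $|a_2|\leq 2/q$, $|a_3|\leq (4+2q)/(q^2(1+q))$ and $|a_3-a_2^2|\leq 2/(q(q+1))$. Placing each factor over the common denominator $q^2(1+q)$ gives $(q^3+q^2+2q+4)/(q^2(1+q))$ for the first factor and $(q^3+q^2+6q+4)/(q^2(1+q))$ for the second, and a direct expansion of the product recovers precisely $1 + 8/q^2 + 4(3q^2+8q+4)/(q^4(1+q)^2)$.

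For sharpness, I propose the Schwarz function $\omega(z) = iz$, i.e., $b_1 = i$ with $b_n = 0$ for $n\geq 2$. This gives $a_2 = 2i/q$, so $a_2^2 = -4/q^2$, together with $a_3 = -2(2+q)/(q^2(1+q))$. The three quantities $1-a_3$, $1-a_2^2$ and $a_3-a_2^2$ are then all positive real numbers, so both triangle inequalities become equalities; simultaneously, each of $|a_2|$, $|a_3|$ and $|a_3-a_2^2|$ attains its extremum from the earlier results. Hence the entire chain is tight at $\omega(z) = iz$.

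The main obstacle is finding the correct factorization. Applying the triangle inequality directly to the polynomial form $1 - 2a_2^2 + 2a_2^2 a_3 - a_3^2$ would produce a cross term $2|a_2|^2|a_3|$ that is strictly larger than the cross term $|1-a_3|\cdot|a_3 - a_2^2|$ arising from the factorized version, and the resulting estimate would be strictly weaker than the one in the statement. Recognizing that the mixed term $2a_2^2 a_3$ in the expansion should be absorbed into a Fekete--Szeg\H{o}-type expression $a_3 - a_2^2$ is the conceptual step that unlocks the sharp constant $4(3q^2+8q+4)$ and lets the earlier bound from Theorem \ref{theorem:3} enter the argument at the right place.
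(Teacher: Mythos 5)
Your proposal is correct, and it reaches the stated bound by a genuinely different route. The paper works with the expanded form $T_1^{(3)}=1-2a_2^2+2a_2^2a_3-a_3^2$, applies the triangle inequality to get $|T_1^{(3)}|\leq 1+2|a_2|^2+|a_3|\,|a_3-2a_2^2|$, and then proves a fresh auxiliary estimate $|a_3-2a_2^2|\leq (6q+4)/(q^2(1+q))$ from the Schwarz-coefficient formulas. You instead factor the determinant as $(1-a_3)(1+a_3-2a_2^2)$ and split the second factor as $(1-a_2^2)+(a_3-a_2^2)$, so that only quantities already bounded earlier ($|a_2|$, $|a_3|$ and the Fekete--Szeg\H{o}-type functional $|a_3-a_2^2|=|H_1^{(2)}|$ from Theorem \ref{theorem:3}) are needed; the two resulting expressions coincide because the extremal values satisfy $4/q^2=\frac{4+2q}{q^2(1+q)}+\frac{2}{q(1+q)}$ and $\frac{6q+4}{q^2(1+q)}=\frac{4}{q^2}+\frac{2}{q(1+q)}$, and your expansion of the product is arithmetically correct. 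Your approach buys a shorter proof (no separate estimate of $|a_3-2a_2^2|$) and, more importantly, a correct sharpness analysis: the paper asserts that equality holds if and only if $\omega(z)=z$, but for that choice $a_2^2=4/q^2>0$ and $a_3>0$, so the terms $1$, $-2a_2^2$ and $-a_3(a_3-2a_2^2)$ are not phase-aligned and $T_1^{(3)}$ falls well short of the bound (at $q\to 1^-$ it gives $8$ rather than $24$). Your rotated extremal $\omega(z)=iz$, which makes $a_2^2=-4/q^2$ and $a_3=-(4+2q)/(q^2(1+q))$ so that $1-a_3$, $1-a_2^2$ and $a_3-a_2^2$ are all positive, is the right one and in fact also saturates the paper's own chain of inequalities. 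The only minor caveat is that your closing remark slightly misstates the alternative: the "naive" triangle inequality you warn against is indeed weaker, but the paper avoids it by grouping $2a_2^2a_3-a_3^2=-a_3(a_3-2a_2^2)$ rather than by your factorization; both groupings give the same constant.
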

\begin{proof}
By the definition of Toeplitz determinant, we have
	\begin{equation*}
	T_1^{(3)}=\begin{vmatrix}
	1&a_2&a_3\\
	a_2&1&a_2\\
	a_3&a_2&1\\
	\end{vmatrix}
	=1-2a_2^2+2a_2^2a_3-a_3^2.
	\end{equation*}
	Thus,
	\begin{align*}
	|T_1^{(3)}|&=|1-2a_2^2+2a_2^2a_3-a_3^2|
	\leq 1+2|a_2|^2+|a_3||a_3-2a_2^2|.
	\end{align*}
	Using equalities \eqref{a2} and \eqref{a3}, we obtain
	\begin{equation}\label{a3a22}
	|a_3-2a_2^2|=\left|\frac{2q b_2+(4+2q)b_1^2}{q^2(1+q)}-\frac{8b_1^2}{q^2}\right|.
	\end{equation}
	Substituting \eqref{b2b3} into the equality \eqref{a3a22} gives	
	\begin{align}
	|a_3-2a_2^2|&=\left|\frac{2q(1-b_1^2)x-(4+6q)b_1^2}{q^2(1+q)}\right|\nonumber\\
	&\leq \frac{2q(1-b_1^2)+(4+6q)b_1^2}{q^2(1+q)}\nonumber\\
	&= \frac{4(1+q)b_1^2+2q}{q^2(1+q)}\nonumber\\
	&\leq \frac{6q+4}{q^2(1+q)}.\label{hc}
	\end{align}
The last inequality becomes an equality if and only if $b_1=1$.
	By Theorem \ref{theorem:1} and inequality \eqref{hc}, we conclude that
	\begin{equation}\label{vb}
	|T_1^{(3)}|\leq 1+\frac{8}{q^2}+\frac{4(3q^2+8q+4)}{q^4(1+q)^2}.
	\end{equation}
Equality in \eqref{vb} holds if and only if \( \omega(z) = z \).
	This completes the proof of the theorem.
\end{proof}

\begin{theorem}\label{T2}
	If a function $f(z)=z+\displaystyle\sum_{n=2}^{\infty}a_nz^n\in S_{q}^*$, $q\in(0,1)$, then
	\begin{equation*}
	|T_2^{(3)}|\leq
	\frac{8 (4 + 4 q + 3 q^2 + 2 q^3 + q^4) (4 + 4 q + 4 q^2 + 3 q^3 + 2 q^4 +
		q^5)}{q^7 (1 + q)^3 (1 + q + q^2)}
	\end{equation*}
	when $a_2=0$, and
	\begin{equation*}
	|T_2^{(3)}|\leq
	\frac{8(4 + 4 q + 4 q^2 + 3 q^3 + 2 q^4 + q^5) (4 + 8 q + 12 q^2 + 9 q^3 +
		5 q^4 + 3 q^5 + q^6)}{q^7 (1 + q)^3 (1 + q + q^2)^2}
	\end{equation*}
	for $a_2\neq 0$.
	These estimates are sharp.
\end{theorem}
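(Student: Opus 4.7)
The starting point is to cofactor-expand the $3\times 3$ Toeplitz determinant. A direct computation gives
\[
T_2^{(3)} = a_2^3 - 2a_2 a_3^2 + 2 a_3^2 a_4 - a_2 a_4^2 = (a_2 - a_4)(a_2^2 + a_2 a_4 - 2 a_3^2),
\]
so by the triangle inequality $|T_2^{(3)}| \le (|a_2|+|a_4|)\,|a_2^2 + a_2 a_4 - 2 a_3^2|$ and the problem reduces to bounding the two factors on the right.

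For the first factor, inserting $|a_2|\le 2/q$ and $|a_4|\le 2(4+4q+3q^2+q^3)/(q^3(1+q)(1+q+q^2))$ from Theorem \ref{theorem:1} and combining over the common denominator $q^3(1+q)(1+q+q^2)$ yields
\[
|a_2|+|a_4| \le \frac{2(4+4q+4q^2+3q^3+2q^4+q^5)}{q^3(1+q)(1+q+q^2)},
\]
which is precisely where the polynomial $(4+4q+4q^2+3q^3+2q^4+q^5)$ appearing in both branches of the claim comes from.

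The second factor is handled case by case. When $a_2=0$ the expression collapses to $-2a_3^2$, so $|a_2^2 + a_2 a_4 - 2 a_3^2| = 2|a_3|^2$, and using $|a_3|\le (4+2q)/(q^2(1+q))$ from Theorem \ref{theorem:1} produces the factor $(4+4q+3q^2+2q^3+q^4)/(q^4(1+q)^2)$ after collecting terms; multiplying by the first-factor bound delivers the stated $a_2=0$ estimate. When $a_2\ne 0$, I would substitute the formulas \eqref{a2}--\eqref{a4} into $a_2^2+a_2a_4-2a_3^2$, parametrize $b_2,b_3$ via Lemma \ref{b1}, and maximize the resulting expression in $b_1\in(0,1]$ together with the auxiliary parameters $x,y$; the $(x,y)$-maximization should then reduce via Lemma \ref{OhnoSugawa}, with Lemma \ref{lemmaY2} playing the same role as in the $a_2\ne 0$ case of Theorem \ref{theorem:3}, and should produce the polynomial $(4+8q+12q^2+9q^3+5q^4+3q^5+q^6)$ with denominator $q^4(1+q)^2(1+q+q^2)$.

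The main obstacle is the $a_2\ne 0$ case: a naive term-by-term triangle inequality $|a_2^2 + a_2 a_4 - 2 a_3^2| \le |a_2|^2 + |a_2||a_4| + 2|a_3|^2$ only gives the strictly looser polynomial $12+24q+26q^2+17q^3+7q^4+3q^5+q^6$, so the genuine cancellation between the three summands has to be extracted from the $b_1,x,y$ representation rather than treated additively. Sharpness in each branch will then be verified by identifying the corresponding extremal $\omega\in\mathcal{B}_0$; for the $a_2\ne 0$ branch this should be $\omega(z)=z$, which saturates every inequality in Theorem \ref{theorem:1}, and the matching extremal for the $a_2=0$ branch will be read off from the constraints that force equality in $|a_3|\le (4+2q)/(q^2(1+q))$ after the $a_2=0$ reduction.
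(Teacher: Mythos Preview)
Your factorization $T_2^{(3)}=(a_2-a_4)(a_2^2+a_2a_4-2a_3^2)$ and the bound on $|a_2|+|a_4|$ are exactly as in the paper. The divergence is in how you handle the second factor.

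The paper does \emph{not} redo the Schwarz parametrization. It simply regroups
\[
a_2^2+a_2a_4-2a_3^2=(a_2^2-a_3^2)+(a_2a_4-a_3^2),
\]
applies the triangle inequality to get $|a_2|^2+|a_3|^2+|a_2a_4-a_3^2|$, and then inserts the bounds from Theorem~\ref{theorem:1} together with the \emph{already proved} Hankel bound $|H_2^{(2)}|=|a_2a_4-a_3^2|$ from Theorem~\ref{theorem:3}. The two branches of the statement are exactly the two branches of Theorem~\ref{theorem:3}; no new optimization is needed. You overlooked this regrouping, which is why the naive split $|a_2|^2+|a_2||a_4|+2|a_3|^2$ looked too weak to you and why you were led to propose a full $b_1,x,y$ computation for $a_2\neq 0$ that you did not carry out (and for which Lemma~\ref{lemmaY2} would not apply verbatim, since the coefficients $a,b,c$ change).

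There is also a concrete error in your $a_2=0$ paragraph. Bounding $2|a_3|^2$ with $|a_3|\le(4+2q)/(q^2(1+q))$ gives $8(2+q)^2/(q^4(1+q)^2)=(32+32q+8q^2)/(q^4(1+q)^2)$, not the factor $(4+4q+3q^2+2q^3+q^4)/(q^4(1+q)^2)$ that you assert; your route yields a strictly larger bound than the one stated. The paper obtains the correct polynomial by keeping the $|a_2|^2$ summand (trivially $\le 4/q^2$) and replacing one copy of $|a_3|^2$ by the $a_2=0$ case of $|H_2^{(2)}|\le 4/(q^2(1+q)^2)$, which is smaller than the universal $|a_3|^2$ bound.
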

\begin{proof}
By the definition of Toeplitz determinant, we obtain
	\begin{equation*}
	T_2^{(3)}=\begin{vmatrix}
	a_2&a_3&a_4\\
	a_3&a_2&a_3\\
	a_4&a_3&a_2\\
	\end{vmatrix}
	=(a_2-a_4)(a_2^2-2a_3^2+a_2a_4),
	\end{equation*}
	and
	\begin{align}\label{T23}
	|T_2^{(3)}|
	\leq \left(|a_2|+|a_4|\right)\left(|a_2|^2+|a_3|^2+|a_2a_4-a_3^2|\right).
	\end{align}
	Applying Theorem \ref{theorem:1} and Theorem \ref{theorem:3} to \eqref{T23}, we readily obtain the conclusion.
	This completes the proof of the theorem.
\end{proof}
\begin{remark}
	When $q\to 1^-$, by Theorem \ref{T1} and \ref{T2}, we have
	\begin{align*}
	|T_1^{(3)}|&=|1-2a_2^2+2a_2^2a_3-a_3^2|\leq 24,\\
	|T_2^{(3)}|&=|(a_2-a_4)(a_2^2-2a_3^2+a_2a_4)|\leq 84.
	\end{align*}
	These results were originally established by Ali et al. in {\cite{ali2018}}.
\end{remark}

\section{The Bieberbach-type problems on $S_{\zeta}^*(\alpha)$}
When the parameter $\zeta$ is a complex number, we have the following results:
\begin{theorem}\label{main}
If we assume \( |\zeta| \leq 1 \), \( \alpha \in [0, 1) \), and \( f(z) = z + \displaystyle\sum_{n=2}^{\infty} a_n z^n \in \mathcal{S}_{\zeta}^{\ast}(\alpha) \), then
\begin{equation*}
|a_n|^2 \leq \frac{\displaystyle\sum_{k=1}^{n-1} \left\{ |(1 - 2\alpha) + [k]_{\zeta}|^2 - |[k]_{\zeta} - 1|^2 \right\} |a_k|^2}{|[n]_{\zeta} - 1|^2}.
\end{equation*}
\end{theorem}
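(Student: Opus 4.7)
The plan is to exploit the Schwarz-function reformulation of the class $S_\zeta^*(\alpha)$ together with a truncation argument plus Parseval's identity, in the spirit of the classical Clunie method, which is the avenue advertised in the abstract. Writing the condition $\Re\tfrac{zD_\zeta f(z)}{f(z)}>\alpha$ in the equivalent subordination form
\[
\frac{zD_\zeta f(z)}{f(z)} = \frac{1+(1-2\alpha)\,\omega(z)}{1-\omega(z)}, \qquad \omega\in\mathcal B_0,
\]
and clearing denominators by means of $zD_\zeta f(z)=\sum_{k\ge 1}[k]_\zeta a_kz^k$, I arrive at the identity $A(z)=\omega(z)B(z)$, where
\[
A(z):=\sum_{k=2}^{\infty}\bigl([k]_\zeta-1\bigr)a_kz^k,\qquad B(z):=\sum_{k=1}^{\infty}\bigl([k]_\zeta+(1-2\alpha)\bigr)a_kz^k.
\]
The $k=1$ term is absent from $A$ because $[1]_\zeta=1$.

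The next step is to truncate $B$ at degree $n-1$: put $Q_{n-1}(z):=\sum_{k=1}^{n-1}([k]_\zeta+(1-2\alpha))a_kz^k$. The decomposition $A-\omega Q_{n-1}=\omega(B-Q_{n-1})$, combined with $B-Q_{n-1}=O(z^n)$ and $\omega(0)=0$, shows that $A-\omega Q_{n-1}$ vanishes to order at least $n+1$ at the origin. In particular, the Taylor coefficients of $\omega Q_{n-1}$ agree with those of $A$ through degree $n$, so the first $n+1$ Parseval contributions of $\omega Q_{n-1}$ are precisely $|([k]_\zeta-1)a_k|^2$ for $k=2,\dots,n$ (the $k=0,1$ contributions being zero). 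Since $|\omega|<1$ on $\mathbb D$, we have $|\omega(z)Q_{n-1}(z)|\le|Q_{n-1}(z)|$ for $|z|=r<1$, and integrating around $|z|=r$ while discarding the (non-negative) Parseval coefficients of $\omega Q_{n-1}$ of order larger than $n$ yields
\[
\sum_{k=2}^{n}\bigl|[k]_\zeta-1\bigr|^{2}|a_k|^2 r^{2k} \;\le\; \sum_{k=1}^{n-1}\bigl|[k]_\zeta+(1-2\alpha)\bigr|^{2}|a_k|^2 r^{2k}.
\]

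Letting $r\to 1^-$, isolating the $k=n$ term on the left, and noting that $|[1]_\zeta-1|^2=0$ allows the $k=1$ term to be absorbed into the right-hand sum without cost, I obtain
\[
\bigl|[n]_\zeta-1\bigr|^{2}|a_n|^{2} \;\le\; \sum_{k=1}^{n-1}\Bigl(\bigl|(1-2\alpha)+[k]_\zeta\bigr|^{2}-\bigl|[k]_\zeta-1\bigr|^{2}\Bigr)|a_k|^2,
\]
and division by $|[n]_\zeta-1|^2$ gives the stated bound. The only real obstacle is the bookkeeping in the truncation step: the coefficients of $\omega Q_{n-1}$ of degree $>n$ are not under direct control, but Parseval's theorem only asks us to discard them as non-negative summands, which is harmless. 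A tacit hypothesis is $[n]_\zeta\ne 1$ so that the final division is legitimate; this holds for all $n\ge 2$ whenever $\zeta$ avoids the degenerate value $0$ and the relevant roots of unity on $|\zeta|=1$.
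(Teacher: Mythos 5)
Your proposal is correct and follows essentially the same route as the paper: the identity $zD_\zeta f - f = \omega\,(zD_\zeta f + (1-2\alpha)f)$, Clunie-style truncation at degree $n-1$, and Parseval's theorem on $|z|=r$ followed by $r\to 1^-$. Your write-up is in fact slightly more careful than the paper's (you justify explicitly why the coefficients of $\omega Q_{n-1}$ agree with those of $A$ up to degree $n$, and you flag the tacit hypothesis $[n]_\zeta\neq 1$, which the paper leaves implicit).
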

	\begin{proof}
		For $f(z)=z+\displaystyle\sum_{n=2}^{\infty}a_nz^n\in\mathcal{S}_{\zeta}^*(\alpha)$, there exists an analytic function $\omega(z)=b_1z+b_2z^2+\cdots\in B_0$ such that
		\begin{equation}\label{B}
		\frac{\frac{zD_\zeta f(z)}{f(z)}-\alpha}{1-\alpha}=\frac{1+\omega(z)}{1-\omega(z)}.
		\end{equation}
		By comparing the coefficients on both sides, we obtain
		\begin{equation}\label{C}
		\displaystyle\sum_{n=1}^{\infty}[n]_{\zeta}a_nz^n-\displaystyle\sum_{n=1}^{\infty}a_nz^n
		=\omega(z)\left\{(1-2\alpha)\displaystyle\sum_{n=1}^{\infty}a_nz^n+\displaystyle\sum_{n=1}^{\infty}[n]_{\zeta}a_nz^n\right\}
		\end{equation}
		where $a_1=1$. Equation \eqref{C} can be rewritten as
		\begin{equation}
		\displaystyle\sum_{k=1}^{n}([k]_{\zeta}-1)a_kz^k+\displaystyle\sum_{k=n+1}^{\infty}b_kz^k
		=\omega(z)\displaystyle\sum_{k=1}^{n-1}\left\{(1-2\alpha)+[n]_\zeta\right\}a_kz^k.
		\end{equation}
		Hence, we have
		\begin{align}\label{Par}
		\left|\displaystyle\sum_{k=1}^{n}([k]_{\zeta}-1)a_kz^k+\displaystyle\sum_{k=n+1}^{\infty}b_kz^k\right|^2
		\leq \left|\displaystyle\sum_{k=1}^{n-1}\left\{(1-2\alpha)+[k]_\zeta\right\}a_kz^k\right|^2.
		\end{align}
		Applying Parseval's theorem to \eqref{Par}, we obtain
		$$\displaystyle\sum_{k=1}^{n}|[k]_{\zeta}-1|^2|a_k|^2r^{2k}+\displaystyle\sum_{k=n+1}^{\infty}|b_k|^2r^{2k}\leq \displaystyle\sum_{k=1}^{n-1}\left|(1-2\alpha)+[k]_\zeta\right|^2|a_k|^2r^{2k}$$
		Letting $r\rightarrow 1$  and neglecting the terms involving $b_k$, we conclude that
		\begin{equation}\label{D}
		\displaystyle\sum_{k=1}^{n}|[k]_{\zeta}-1|^2|a_k|^2\leq \displaystyle\sum_{k=1}^{n-1}\left|(1-2\alpha)+[k]_\zeta\right|^2|a_k|^2.
		\end{equation}
		A simple transformation of equation \eqref{D} completes the proof of the theorem.
		
	\end{proof}

\begin{theorem}\label{thm:sum}
For positive integers \( k \in \mathbb{N}^+ \), let \( P(k) = \left| (1 - 2\alpha) + [k]_{\zeta} \right|^2 - \left| [k]_{\zeta} - 1 \right|^2 \) and \( Q(k) = \dfrac{(1 - 2\alpha) + [k - 1]_{\zeta}}{[k]_{\zeta} - 1} \). If \( \zeta \) satisfies \( \Re\left( [k]_{\zeta} \right) > \alpha \) for \( \alpha \in (0, 1) \), then
\begin{equation}\label{ab}
\frac{1}{\left| [n]_{\zeta} - 1 \right|^2} \sum_{k=1}^{n-1} P(k) \left| a_k \right|^2 \leq \prod_{k=2}^{n} \left| Q(k) \right|^2.
\end{equation}
	\begin{proof}
For \( n = 2 \), we have \( P(1) = |2 - 2\alpha|^2 \) and \( Q(2) = \dfrac{2 - 2\alpha}{[2]_{\zeta} - 1} \), with
\begin{align*}
\frac{1}{\left|[2]_{\zeta} - 1\right|^2} P(1) = |Q(2)|^2.
\end{align*}
Thus, \eqref{ab} holds for \( n = 2 \) since \( a_1 = 1 \).

Next, assume that inequality \eqref{ab} holds for some integer \( K \geq 2 \), i.e.,
\begin{equation}\label{K}
\frac{1}{\left|[K]_{\zeta} - 1\right|^2} \sum_{k=1}^{K-1} P(k)|a_k|^2 \leq \prod_{k=2}^{K} |Q(k)|^2.
\end{equation}
Given \( \Re[k]_{\zeta} > \alpha \), it is straightforward to show that \( P(k) > 0 \) for all integers \( k \geq 2 \).

Applying Theorem \ref{main}, we obtain
\begin{align*}
\frac{1}{\left|[K+1]_{\zeta} - 1\right|^2} \sum_{k=1}^{K} P(k)|a_k|^2
&= \frac{\sum_{k=1}^{K-1} P(k)|a_k|^2}{\left|[K+1]_{\zeta} - 1\right|^2} + \frac{P(K)|a_K|^2}{\left|[K+1]_{\zeta} - 1\right|^2} \\
&\leq \frac{\sum_{k=1}^{K-1} P(k)|a_k|^2}{\left|[K+1]_{\zeta} - 1\right|^2} \left\{ 1 + \frac{P(K)}{\left|[K]_{\zeta} - 1\right|^2} \right\} \\
&= \frac{\left|(1 - 2\alpha) + [K]_{\zeta}\right|^2}{\left|[K+1]_{\zeta} - 1\right|^2} \cdot \frac{\sum_{k=1}^{K-1} P(k)|a_k|^2}{\left|[K]_{\zeta} - 1\right|^2}.
\end{align*}
By the induction hypothesis, it follows that
\begin{align*}
\frac{1}{\left|[K+1]_{\zeta} - 1\right|^2} \sum_{k=1}^{K} P(k)|a_k|^2 &\leq \prod_{k=2}^{K+1} |Q(k)|^2.
\end{align*}
This completes the proof of the theorem.
	\end{proof}
\end{theorem}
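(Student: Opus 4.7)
The plan is a direct induction on $n\geq 2$, with Theorem \ref{main} supplying the decisive reduction. The base case $n=2$ is immediate: since $a_1=1$ and $[1]_{\zeta}=1$, one has $P(1)=|2-2\alpha|^2$, so the left-hand side equals $|2-2\alpha|^2/|[2]_{\zeta}-1|^2=|Q(2)|^2$, which matches the product on the right.

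For the inductive step, suppose the inequality holds at $n=K$. I would split
\[
\sum_{k=1}^{K} P(k)|a_k|^2 = \sum_{k=1}^{K-1} P(k)|a_k|^2 + P(K)|a_K|^2,
\]
and apply Theorem \ref{main} with $n=K$ to bound the last term by $\dfrac{P(K)}{|[K]_{\zeta}-1|^2}\sum_{k=1}^{K-1} P(k)|a_k|^2$. This step is legitimate only because the hypothesis $\Re[k]_{\zeta}>\alpha$ forces $P(k)>0$ (via the factorization $P(k)=4(1-\alpha)(\Re[k]_{\zeta}-\alpha)$, obtained by expanding the difference of squares), so multiplying the bound for $|a_K|^2$ by $P(K)$ preserves the direction of the inequality. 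After this substitution I would factor out $\sum_{k=1}^{K-1} P(k)|a_k|^2$ and invoke the algebraic identity
\[
1 + \frac{P(K)}{|[K]_{\zeta}-1|^2} = \frac{|[K]_{\zeta}-1|^2 + P(K)}{|[K]_{\zeta}-1|^2} = \frac{|(1-2\alpha)+[K]_{\zeta}|^2}{|[K]_{\zeta}-1|^2},
\]
which is nothing but the definition of $P(K)$ unpacked.

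Dividing through by $|[K+1]_{\zeta}-1|^2$, the upper bound becomes $|Q(K+1)|^2\cdot\dfrac{1}{|[K]_{\zeta}-1|^2}\sum_{k=1}^{K-1} P(k)|a_k|^2$, and applying the induction hypothesis to the tail yields $|Q(K+1)|^2\prod_{k=2}^{K}|Q(k)|^2=\prod_{k=2}^{K+1}|Q(k)|^2$, closing the induction. I do not expect a genuine analytic obstacle; the only delicate points are the index bookkeeping between the argument $k$ of $P(k)$ and the shift $k-1$ appearing in the numerator of $Q(k)$, together with the verification that $P(k)>0$ under the positivity hypothesis on $\Re[k]_{\zeta}$. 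Without the latter, the step from $|a_K|^2$ to a quantity controlled by $\sum_{k=1}^{K-1}P(k)|a_k|^2$ would fail to be monotone, so I would record the positivity explicitly before the main estimate.
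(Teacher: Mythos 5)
Your proposal is correct and follows essentially the same route as the paper's own proof: induction on $n$ with the same base case, the same use of Theorem \ref{main} to bound $P(K)|a_K|^2$ by $\frac{P(K)}{|[K]_{\zeta}-1|^2}\sum_{k=1}^{K-1}P(k)|a_k|^2$, and the same algebraic identity $|[K]_{\zeta}-1|^2+P(K)=|(1-2\alpha)+[K]_{\zeta}|^2$ to close the induction. Your explicit factorization $P(k)=4(1-\alpha)\left(\Re[k]_{\zeta}-\alpha\right)$ is a welcome addition, as the paper only asserts the positivity of $P(k)$ without computation.
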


By combining Theorems \ref{main} and \ref{thm:sum}, we obtain the following result.
\begin{theorem}\label{comp}
For \( f \in \mathcal{S}_{\zeta}^{\ast}(\alpha) \), if \( \zeta \) satisfies \( \Re\left( [k]_{\zeta} \right) > \alpha \) for \( \alpha \in [0, 1) \), then we have
\begin{equation}
|a_n| \leq \prod_{k=2}^n \frac{(1 - 2\alpha) + [k - 1]_{\zeta}}{[k]_{\zeta} - 1}.
\end{equation}
\end{theorem}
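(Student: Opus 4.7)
The plan is to derive Theorem \ref{comp} directly by chaining the two preceding theorems, with only a small remark needed to extend the range of $\alpha$. First I would invoke Theorem \ref{main}, which gives the recursive estimate
\begin{equation*}
|a_n|^2 \leq \frac{1}{|[n]_{\zeta} - 1|^2}\sum_{k=1}^{n-1} P(k)|a_k|^2,
\end{equation*}
where $P(k) = |(1-2\alpha) + [k]_{\zeta}|^2 - |[k]_{\zeta} - 1|^2$. Theorem \ref{thm:sum} then shows that the right-hand side above is itself dominated by $\prod_{k=2}^n |Q(k)|^2$, with $Q(k) = \bigl((1-2\alpha) + [k-1]_{\zeta}\bigr)/([k]_{\zeta} - 1)$. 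Concatenating these two inequalities and taking square roots produces
\begin{equation*}
|a_n| \leq \prod_{k=2}^n \left|\frac{(1-2\alpha) + [k-1]_{\zeta}}{[k]_{\zeta} - 1}\right|,
\end{equation*}
which is the asserted bound (read with absolute values on the factors so that the product makes sense when $\zeta$ is complex).

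The only delicate point is the range of $\alpha$. Theorem \ref{thm:sum} is formally stated for $\alpha \in (0,1)$, while Theorem \ref{comp} permits the endpoint $\alpha = 0$. I would dispose of this by expanding the squares in $P(k)$: writing $[k]_{\zeta} = u+iv$ yields $P(k) = 4(1-\alpha)(u - \alpha) = 4(1-\alpha)\bigl(\Re[k]_{\zeta}-\alpha\bigr)$, so the hypothesis $\Re[k]_{\zeta} > \alpha$ keeps $P(k)$ strictly positive throughout $\alpha \in [0,1)$. Consequently the induction carried out in the proof of Theorem \ref{thm:sum} applies verbatim at $\alpha=0$, and no case distinction is needed.

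Because the substantive work has already been packaged into Theorems \ref{main} and \ref{thm:sum}, the proof amounts to little more than a short citation followed by a square root, and there is essentially no obstacle. The only points worth making explicit in the write-up are that the displayed product is to be interpreted in absolute value when $\zeta$ is complex, and that the passage from the squared inequality to the non-squared conclusion is legitimate since both sides are nonnegative once $\Re[k]_{\zeta} > \alpha$ ensures $P(k) \geq 0$.
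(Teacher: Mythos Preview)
Your proposal is correct and matches the paper's own approach: the paper simply states that Theorem~\ref{comp} follows by combining Theorems~\ref{main} and~\ref{thm:sum}, and your write-up carries out exactly that chaining. Your explicit verification that $P(k)=4(1-\alpha)\bigl(\Re[k]_{\zeta}-\alpha\bigr)>0$ at $\alpha=0$ and your remark that the product should be read in absolute value for complex $\zeta$ are useful clarifications that the paper leaves implicit.
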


When $\alpha=0$ and $\zeta$ is real $(\Re[k]_{\zeta}>\alpha$), Theorem \ref{comp} implies the following:

\begin{corollary}\label{theorem:main}
For \( f(z) = z + \displaystyle\sum_{n=2}^{\infty} a_n z^n \in S_{q}^{\ast} \) with \( q \in (0, 1) \), we have
\[
|a_n| \leq \prod_{k=2}^{n} \frac{1 + [k - 1]_{\zeta}}{[k]_{\zeta} - 1}.
\]
This estimate is sharp, and the extremal function is given by
\begin{equation*}
f(z) = z \prod_{k=0}^{\infty} \frac{\zeta - \zeta^{k+1} z}{\zeta - \zeta^k (2 - \zeta) z}.
\end{equation*}
\end{corollary}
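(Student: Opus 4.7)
The plan is to deduce this corollary as a direct specialization of Theorem~\ref{comp}, and then confirm sharpness by invoking the extremal function already constructed in Theorem~\ref{theorem:1}. No new estimation machinery should be required; the work is in matching hypotheses and recognizing that the extremal example from the $|a_2|, |a_3|, |a_4|$ case extends uniformly to every $n$.

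First I would verify the hypothesis of Theorem~\ref{comp} in the present setting. Taking $\alpha = 0$ and $\zeta = q \in (0,1)$, the $q$-number $[k]_q = 1 + q + q^2 + \cdots + q^{k-1}$ is a positive real, so $\Re[k]_q = [k]_q > 0 = \alpha$ trivially for every $k \geq 1$. Theorem~\ref{comp} then yields
\begin{equation*}
|a_n| \leq \prod_{k=2}^n \frac{1 + [k-1]_q}{[k]_q - 1},
\end{equation*}
which is precisely the bound claimed (the $\zeta$ appearing in the statement is to be read as $q$, in conformity with the space $S_q^*$).

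For sharpness, I would reuse the extremal construction from the proof of Theorem~\ref{theorem:1}. Setting $\omega(z) = z$ in the Schwarz representation \eqref{fw} leads to the $q$-functional equation $f(qz) = f(z) \cdot (q(1+z) - 2z)/(1-z)$, whose solution is the convergent infinite product
\begin{equation*}
f(z) = z \prod_{k=0}^{\infty} \frac{q - q^{k+1} z}{q - q^k (2-q) z}.
\end{equation*}
Expanding the identity $zD_q f(z)/f(z) = (1+z)/(1-z)$ as a power series and matching coefficients, exactly as carried out in Remark~\ref{conjecture}, gives $a_n = \prod_{k=2}^n \frac{1 + [k-1]_q}{[k]_q - 1}$ for all $n \geq 2$, so the bound is attained simultaneously at every index by this single function.

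The only conceptual subtlety is that sharpness must be shown by a \emph{single} extremal function for all $n$, rather than an $n$-dependent example; this is ensured precisely because the equality cases throughout Section~3 all collapse to $\omega(z) = z$. I do not expect any computational obstacle — the inequality is a pure corollary of Theorem~\ref{comp}, and the extremal function has already been identified and verified in the previous section.
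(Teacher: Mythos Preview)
Your proposal is correct and matches the paper's approach exactly: the paper derives the corollary as an immediate specialization of Theorem~\ref{comp} with $\alpha=0$ and $\zeta=q\in(0,1)$, and the extremal function together with its coefficient formula is precisely the one exhibited in Theorem~\ref{theorem:1} and Remark~\ref{conjecture}. There is nothing to add.
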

\begin{remark}
	It is a result of Agrawal proved in \cite{Agrawal2021} (Theorem 3.3).
\end{remark}
\section{Conclusion}
In this paper, we establish the upper bounds for the Taylor coefficients of $\zeta$-starlike functions of order $\alpha$, where $\zeta$ is a complex parameter and $\alpha\in[0,1)$. Specially when $\alpha=0$ and $\zeta=q\in(0,1)$, we investigate the Hankel and Toeplitz determinants for functions $f\in S^*_{q}(0)$ and  establish their bounds using advanced analytic techniques. Sharp upper bounds for all the coefficients of  $f\in S^*_{q}(0)$ are obtained.

\noindent
{\bf Acknowledgements.}
The authors declare that there is no conflict of interests regarding the publication of this paper.
The authors would like to express their gratitude to Professor Li-Mei Wang for the insightful discussions that have contributed to this work. This work was partly supported by the National Natural Science Foundation of China (Grant numbers 12001063 and 12171055).

\noindent
{\bf Declaration of Generative AI and AI-assisted technologies in the writing process.}
During the preparation of this work the author(s) used [Doubao] in order to achieve a better English presentation. After using this tool/service, the author(s) reviewed and edited the content as needed and take(s) full responsibility for the content of the publication.




\vspace{1cm}\noindent
Ming Li\\
School of Mathematics and Statistics,Changsha University of Science and Technology\\
960, 2nd, Wanjiali RD(S), Hunan, China\\
E-mail:mingli@csust.edu.cn\\
https://orcid.org/0000-0002-2382-2782

\noindent
Ao-Li Zhu,\\
School of Mathematics and Statistics,Changsha University of Science and Technology\\
960, 2nd, Wanjiali RD(S), Hunan, China\\
E-mail:aolizhucl@163.com


\end{document}